\DeclareMathOperator{\mmp}{mmp}
\DeclareMathOperator{\rlmax}{rlmax}
\DeclareMathOperator{\rlmin}{rlmin}
\DeclareMathOperator{\lrmax}{lrmax}
\DeclareMathOperator{\lrmin}{lrmin}
\newtheorem{thm}{Theorem}[section]
\newtheorem{prop}[thm]{Proposition}
\begin{document}
	\captionsetup[figure]{labelfont={bf}, labelformat={default}, labelsep={period}, name={Fig.}}
	\begin{center}
		{\large \bf Distribution of maxima and minima statistics on alternating permutations, Springer numbers, and avoidance of flat POPs}
	\end{center}

	\begin{center}
		Tian Han\footnote[1]{College of Mathematical Science, Tianjin Normal University, Tianjin  300387, P. R. China.  {\bf Email:} \\ hantian.hhpt@qq.com.}, Sergey Kitaev\footnote{Department of Mathematics and Statistics, University of Strathclyde, 26 Richmond Street, Glasgow G1 1XH, United Kingdom. {\bf Email:} sergey.kitaev@strath.ac.uk.}\ and Philip B. Zhang\footnote{College of Mathematical Science, Tianjin Normal University, Tianjin  300387, P. R. China.  {\bf Email:} zhang@tjnu.edu.cn.}
	\end{center}

\centerline{\textbf{Abstract.}}
In this paper, we find distributions of the left-to-right maxima, right-to-left maxima, left-to-right minima and right-to-left-minima statistics on up-down and down-up permutations of even and odd lengths. For instance, we show that the distribution of right-to-left maxima on up-down permutations of even length is given by $(\sec (t))^{q}$. We also derive the joint distribution  of the maxima (resp., minima) statistics. To accomplish this, we generalize a result of Kitaev and Remmel by deriving joint distributions involving non-maxima (resp., non-minima) statistics. Consequently, we refine classic enumeration results of Andr\'e by introducing new $q$-analogues and $(p,q)$-analogues for the number of alternating permutations. 

Additionally, we verify Callan's conjecture (2012) that the number of up-down permutations of even length fixed by reverse and complement equals the Springer numbers, thereby offering another combinatorial interpretation of these numbers. Furthermore, we propose two $q$-analogues and a $(p,q)$-analogue of the Springer numbers. Lastly, we enumerate alternating permutations that avoid certain flat partially ordered patterns (POPs), where the only minimum or maximum elements are labeled by the largest or smallest numbers. 

\noindent
\textbf{Keywords:} Alternating permutation; partially ordered pattern; permutation statistic; Springer number; generating function; distribution

\noindent
\textbf{AMS Classification 2020:} 05A05, 05A15

\section{Introduction}\label{sec1}

\noindent
A permutation of length $n$, or an $n$-permutation, is a rearrangement of the set $[n]:=\{1, 2, \ldots, n\}$.  Denote by  $S_n$  the set of permutations of $[n]$. For $\pi \in S_n$, let $\pi^r=\pi_n\pi_{n-1}\cdots\pi_1$ and $\pi^c= (n + 1 - \pi_1)(n + 1 - \pi_2)\cdots (n + 1-\pi_n)$
denote the {\em reverse} and {\em complement} of $\pi$, respectively. Then $\pi^{rc}=(n + 1-\pi_n)(n + 1-\pi_{n-1})\cdots (n + 1-\pi_1)$. A permutation $\pi_1\pi_2\cdots\pi_n\in S_n$ avoids a {\em pattern} $p_1p_2\cdots p_k\in S_k$  if there is no subsequence $\pi_{i_1}\pi_{i_2}\cdots\pi_{i_k}$ such that $\pi_{i_j}<\pi_{i_m}$ if and only if $p_j<p_m$. For example, the permutation $32154$ avoids the pattern $231$. The area of permutation patterns attracted much attention in the literature (see \cite{Kitaev2011} and reference therein).
 
 We say that $\pi=\pi_1\cdots\pi_n\in S_n$
is an {\em up-down} (resp., {\em down-up}) permutation if it is of the form $\pi_1<\pi_2>\pi_3<\pi_4>\pi_5<\cdots$ (resp., $\pi_1>\pi_2<\pi_3>\pi_4<\pi_5>\cdots$). By ``alternating permutations'', one typically refers to down-up permutations. However, following \cite{KitRem2012}, we define alternating permutations to include both up-down and down-up permutations. Let $UD_n$ (resp., $DU_n$) denote the set of all up-down (resp., down-up) permutations in $S_n$. 

Of interest to us are the following classical permutation statistics. For $1\leq i\leq n$, $\pi_i$ is a \emph{right-to-left maximum} (resp., \emph{right-to-left minimum}) in $\pi$ if  $\pi_i$ is greater (resp., smaller) than any element to its right. Note that $\pi_n$ is always a right-to-left maximum and a right-to-left minimum. Denote by $\rlmax(\pi)$ and $\rlmin(\pi)$ the number of right-to-left maxima and right-to-left minima in $\pi$, respectively. We  define \emph{left-to-right maxima} (resp., \emph{left-to-right minima}) in a permutation $\pi$,  number of which is denoted by $\lrmax(\pi)$ (resp., $\lrmin(\pi)$), in a similar way. For example, if $\pi=34152$ then $\lrmax(\pi)=3$ and $\lrmin(\pi)=\rlmin(\pi)=\rlmax(\pi)=2$.

 
\subsection{Euler numbers}\label{Euler-numbers-subsec} 
\noindent The {\em Euler numbers} $E_n$ are defined by the exponential generating function 
\begin{equation}\label{E_n}
E(t):=\sum_{n=0}^{\infty}E_n\frac{t^n}{n!}=\sec t+\tan t.
\end{equation}
The numbers $E_{2n}$ are also called {\em secant numbers}, and  the numbers $E_{2n+1}$ are called {\em tangent numbers}. The Euler numbers satisfy the recurrence
$$E_{n+1}=\frac{1}{2}\sum_{k=0}^{n}{n\choose k}E_{n-k}E_{k},$$
for $n\geq 1$, with initial condition $E_0=E_1=1$. Andr\'e \cite{Andre1,Andre2} showed that $E_n$ enumerates the down-up permutations (or, by applying the complement, up-down permutations). The sequence of Euler numbers starts as $1,1,1,2,5,16,61,272,1385,\ldots$; see sequence A000111 in \cite{oeis}. Other combinatorial objects are enumerated by the Euler numbers (see \cite{Sokal2020} and references therein).
 
\subsection{Springer numbers} 
\noindent The {\em Springer numbers} $\mathcal{S}_n$ are defined by the exponential generating function \cite{Gla1898,Gla1914,Springer1971}
\begin{equation}\label{Springer numbers}
\frac{1}{\cos t-\sin t}=\sum_{n=0}^{\infty}\mathcal{S}_n\frac{t^n}{n!}
\end{equation}
and the sequence of Springer numbers starts as $1, 1, 3, 11, 57, 361, 2763, \ldots$; see sequence A001586 in \cite{oeis}.   
Arnol'd \cite{Arnold} showed that $\mathcal{S}_n$ enumerates a signed-permutation analogue of the alternating permutations involving the notion of a ``snakes of type $B_n$''.  Several other combinatorial objects are also enumerated by the Springer numbers: Weyl chambers in the principal Springer cone of the Coxeter group $B_n$ \cite{Springer1971}, topological types of odd functions with $2n$ critical values \cite{Arnold}, labeled ballot paths \cite{Chen-et-al}, and certain classes of complete binary trees and plane rooted forests \cite{J-V2014}. Also, Springer numbers are studied from the point of view of the classical moment problem \cite{Sokal2020}.
 
Based on experimental observations, Callan \cite{Callan} conjectured (and published his findings in  A001586 in \cite{oeis} in 2012) that the number of up-down permutations of even length fixed under reverse and complement are given by the Springer numbers. Examples of such permutations are 2413, 362514 and 57681324. In this paper, we solve this conjecture, and hence we provide yet another combinatorial interpretation of the Springer numbers. Moreover, we introduce the statistics LLE (the number of elements to the left of the left extreme elements) and BE (half of the number of elements between the extreme elements) that give two $q$-analogues and a $(p,q)$-analogue of the Springer numbers. We refer to \cite{EuFu} for a discussion of $q$- and $(p,q)$-analogues of the Springer numbers realized on increasing binary trees with empty leaves. 

\subsection{Quadrant marked mesh patterns}  

\noindent
{\em Quadrant marked mesh patterns} were introduced in \cite{kitrem}, but the paper \cite{KitRem2012} is most relevant in our context. These patterns are defined as follows. 

Let $\pi = \pi_1 \ldots \pi_n$ be a permutation in  $S_n$. Then we will consider the 
graph of $\pi$, $G(\pi)$, to be the set of points $(i,\pi_i)$ for 
$i =1, \ldots, n$.  For example, the graph of the permutation 
$\pi = 471569283$ is pictured in Figure~\ref{fig:basic}.  Then if we draw a coordinate system centered at a 
point $(i,\pi_i)$, we will be interested in  the points that 
lie in the four quadrants I, II, III, and IV of that 
coordinate system as pictured 
in Figure \ref{fig:basic}.  For any $a,b,c,d \in  
\mathbb{N}= \{0,1,2, \ldots \}$, 
we say that $\pi_i$ matches the 
quadrant marked mesh pattern MMP$(a,b,c,d)$ in $\pi$ if in $G(\pi)$  relative 
to the coordinate system which has the point $(i,\pi_i)$ as its  
origin,  there are 
$\geq a$ points in quadrant I, 
$\geq b$ points in quadrant II, $\geq c$ points in quadrant 
III, and $\geq d$ points in quadrant IV.  
For example, 
if $\pi = 471569283$, the point $\pi_4 =5$  matches 
the quadrant marked mesh pattern MMP$(2,1,2,1)$ since relative 
to the coordinate system with origin $(4,5)$,  
there are 3 points in $G(\pi)$ in quadrant I, 
1 point in $G(\pi)$ in quadrant II, 2 points in $G(\pi)$ in quadrant III, and 2 points in $G(\pi)$ in 
quadrant IV.  Note that if a coordinate 
in MMP$(a,b,c,d)$ is 0, then there is no condition imposed 
on the points in the corresponding quadrant. We let $\mmp^{(a,b,c,d)}(\pi)$ be the number of occurrence of MMP$(a,b,c,d)$ in $\pi$.

\begin{figure}
\begin{center}
\includegraphics[scale=0.7]{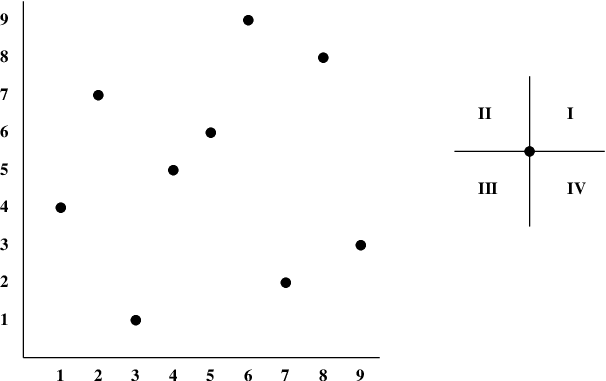}
\caption{The graph of $\pi = 471569283$.}\label{fig:basic}
\end{center}
\end{figure}

Note that by definition, occurrences of MMP$(1,0,0,0)$ (resp., MMP$(0,1,0,0)$, MMP$(0,0,1,0)$, MMP$(0,0,0,1)$) in $\pi$ are precisely occurrences of non-right-to-left maxima (resp., non-left-to-right maxima, non-left-to-right minima, non-right-to-left minima). This simple observation allows us, based on results in \cite{KitRem2012}, to derive distributions of  left-to-right (resp., right-to-left) maxima and minima on alternating permutations. Additionally, we derive a joint distribution of left-to-right and right-to-left maxima (resp, minima) on alternating permutations. To the best of our knowledge, these distributions have not been recorded in the literature; only the distribution of left-to-right maxima on even-length permutations appears in A085734 in \cite{oeis} apparently as the result of computational experiments.  To derive the joint distributions, we generalize a result of Kitaev and Remmel (Theorem 1 in \cite{KitRem2012}) by finding joint distributions of (MMP(0,1,0,0), MMP(1,0,0,0)) and  (MMP(0,0,1,0), MMP(0,0,0,1)).  Hence, we refine classic enumeration results of Andr\'e \cite{Andre1,Andre2} by providing new $q$-analogues and $(p,q)$-analogues for the number of alternating permutations.  

Note that, by reverse and/or complement operations, all four statistics in question are equidistributed on alternating permutations, while on up-down (resp., down-up) permutations the left-to-right (resp., right-to-left) statistics are equidistributed. However, for technical reasons, we also need to consider separately the cases of even- and odd-length permutations. Hence, the fact mentioned in A085734 in \cite{oeis} is 1/24 of all distribution results we obtain. 

\subsection{Partially ordered patterns}  
\noindent Kitaev~\cite{Kit2007} introduced the notion of a {\em partially ordered pattern} ({\em POP}), which attracted significant attention in the literature; e.g., see~\cite{HHM-2022,YWZ}. In \cite{GaoKit2019} a more convenient way to define POPs is introduced, which we use in this paper. 
 
A {\em partially ordered pattern} ({\em POP}) $p$ of length $k$ is defined by a $k$-element partially ordered set (poset) $P$ labeled by the elements in $\{1,\ldots,k\}$. An occurrence of such a POP $p$ in a permutation $\pi=\pi_1\cdots\pi_n$ is a subsequence $\pi_{i_1}\cdots\pi_{i_k}$, where $1\leq i_1<\cdots< i_k\leq n$,  such that $\pi_{i_j}<\pi_{i_m}$ if and only if $j<m$ in $P$. Thus, a classical pattern of length $k$ corresponds to a $k$-element chain. For example, the POP $p=$ \hspace{-3.5mm}
\begin{minipage}[c]{3.5em}\scalebox{1}{
		\begin{tikzpicture}[scale=0.5]
		
		\draw [line width=1](0,-0.5)--(0,0.5);
		
		\draw (0,-0.5) node [scale=0.4, circle, draw,fill=black]{};
		\draw (1,-0.5) node [scale=0.4, circle, draw,fill=black]{};
		\draw (0,0.5) node [scale=0.4, circle, draw,fill=black]{};
		
		\node [left] at (0,-0.6){${\small 3}$};
		\node [right] at (1,-0.6){${\small 2}$};
		\node [left] at (0,0.6){${\small 1}$};
		
		\end{tikzpicture}
}\end{minipage}
occurs six times in the permutation 41523, namely, as the subsequences 412, 413, 452, 453, 423, and 523. Clearly, avoiding $p$ is the same as avoiding the patterns 312, 321	and 231, defined above, at the same time. 

\begin{figure}[t]
	\centering

\begin{tabular}{cccc}

	\begin{tikzpicture}[scale=0.6]
	
	\draw [line width=1](0,0)--(1.5,1.5);
	\draw [line width=1](1,0)--(1.5,1.5);
	\draw [line width=1](3,0)--(1.5,1.5);	
	
	\draw (0,0) node [scale=0.4, circle, draw,fill=black]{};
	\draw (1,0) node [scale=0.4, circle, draw,fill=black]{};
	\draw (3,0) node [scale=0.4, circle, draw,fill=black]{};
	\draw (1.5,1.5) node [scale=0.4, circle, draw,fill=black]{};
	
	\draw (1.75,0) node [scale=0.15, circle, draw,fill=black]{};
	\draw (2,0) node [scale=0.15, circle, draw,fill=black]{};
	\draw (2.25,0) node [scale=0.15, circle, draw,fill=black]{};
		
	\node [below] at (0,-0.1){\footnotesize{$1$}};
	\node [below] at (1,-0.1){\footnotesize{$2$}};
	\node [below] at (3,-0.1){\footnotesize{$k-1$}};
	\node [above] at (1.5,1.5){\footnotesize{$k$}};
	\end{tikzpicture}
	
	&

	\begin{tikzpicture}[scale=0.6]
	
	\draw [line width=1](0,0)--(1.5,1.5);
	\draw [line width=1](1,0)--(1.5,1.5);
	\draw [line width=1](3,0)--(1.5,1.5);	
	
	\draw (0,0) node [scale=0.4, circle, draw,fill=black]{};
	\draw (1,0) node [scale=0.4, circle, draw,fill=black]{};
	\draw (3,0) node [scale=0.4, circle, draw,fill=black]{};
	\draw (1.5,1.5) node [scale=0.4, circle, draw,fill=black]{};
	
	\draw (1.75,0) node [scale=0.15, circle, draw,fill=black]{};
	\draw (2,0) node [scale=0.15, circle, draw,fill=black]{};
	\draw (2.25,0) node [scale=0.15, circle, draw,fill=black]{};
		
	\node [below] at (0,-0.1){\footnotesize{$2$}};
	\node [below] at (1,-0.1){\footnotesize{$3$}};
	\node [below] at (3,-0.1){\footnotesize{$k$}};
	\node [above] at (1.5,1.5){\footnotesize{$1$}};
	\end{tikzpicture}
	
	&

\begin{tikzpicture}[scale=0.6]

\draw [line width=1](0,1.5)--(1.5,0);
\draw [line width=1](1,1.5)--(1.5,0);
\draw [line width=1](3,1.5)--(1.5,0);	

\draw (1.5,0) node [scale=0.4, circle, draw,fill=black]{};
\draw (1,1.5) node [scale=0.4, circle, draw,fill=black]{};
\draw (3,1.5) node [scale=0.4, circle, draw,fill=black]{};
\draw (0,1.5) node [scale=0.4, circle, draw,fill=black]{};

\draw (1.75,1.5) node [scale=0.15, circle, draw,fill=black]{};
\draw (2,1.5) node [scale=0.15, circle, draw,fill=black]{};
\draw (2.25,1.5) node [scale=0.15, circle, draw,fill=black]{};

\node [above] at (0,1.5){\footnotesize{$1$}};
\node [above] at (1,1.5){\footnotesize{$2$}};
\node [above] at (3,1.5){\footnotesize{$k-1$}};
\node [below] at (1.5,0){\footnotesize{$k$}};
\end{tikzpicture}
	
	&

	\begin{tikzpicture}[scale=0.6]
	
	\draw [line width=1](0,1.5)--(1.5,0);
	\draw [line width=1](1,1.5)--(1.5,0);
	\draw [line width=1](3,1.5)--(1.5,0);	
	
	\draw (1.5,0) node [scale=0.4, circle, draw,fill=black]{};
	\draw (1,1.5) node [scale=0.4, circle, draw,fill=black]{};
	\draw (3,1.5) node [scale=0.4, circle, draw,fill=black]{};
	\draw (0,1.5) node [scale=0.4, circle, draw,fill=black]{};
	
	\draw (1.75,1.5) node [scale=0.15, circle, draw,fill=black]{};
	\draw (2,1.5) node [scale=0.15, circle, draw,fill=black]{};
	\draw (2.25,1.5) node [scale=0.15, circle, draw,fill=black]{};
		
	\node [above] at (0,1.5){\footnotesize{$2$}};
	\node [above] at (1,1.5){\footnotesize{$3$}};
	\node [above] at (3,1.5){\footnotesize{$k$}};
	\node [below] at (1.5,0){\footnotesize{$1$}};
	\end{tikzpicture}
	
	\end{tabular}
	\caption{Flat POPs of interest in our paper.}\label{pic-Bk}
\end{figure}
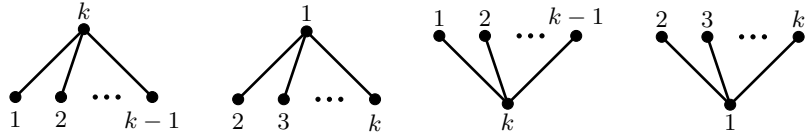

Of interest to us in this paper are POPs defined by \emph{flat posets}, examples of which as presented in Figure~\ref{pic-Bk}. We refer to these POPs as \emph{flat POPs}. Flat POPs were  introduced in \cite{Kit2007}, and permutations avoiding {\em any} flat POP were enumerated in \cite{GaoKit2019}. In this paper, we enumerate alternating permutations avoiding flat POPs in Figure~\ref{pic-Bk}. 

In fact, for flat POPs in Figure~\ref{pic-Bk} one can easily extend the avoidance result in \cite{GaoKit2019} to obtain a recurrence relation for distribution of these patterns among all permutations. Indeed, suppose $P(n,\ell)$ denotes the number of $n$-permutations with $\ell$ occurrences of the  leftmost POP in Figure~\ref{pic-Bk} (any other POP in this figure is equivalent to this one by applying reverse and/or complement operations). Then, inserting the new largest element $(n+1)$ in an $n$-permutation avoiding the POP, we have
\begin{equation}\label{POP-distr}P(n+1,\ell)=\sum_{j=1}^{n+1}P(n,\ell-{j-1\choose k-1})\end{equation}
where ${a\choose b}=0$ if $a<b$, $P(0,\ell)=1$ if $\ell=0$ and $P(0,\ell)=0$ if $\ell\neq 0$, and $P(n,0)$ is given in \cite[Thm 2]{GaoKit2019}.  To derive (\ref{POP-distr}) we used the following observation: if $(n+1)$ is inserted in position $j$, then ${j-1\choose k-1}$ new occurrence of the POP are introduced. 

%
%
%
%
%
%
%

\subsection{Organization of the paper} 

\noindent
The paper is organized as follows. In Section~\ref{Springer-sec}, we demonstrate that the number of up-down permutations fixed under reverse and complement is counted by the Springer numbers, thereby confirming Callan's conjecture and offering a new combinatorial interpretation of these numbers. Moreover, in Section~\ref{q-Springer-sec} (resp., Section~\ref{pq-Springer-sec}) we give two $q$-analogues (resp., a $(p,q)$-analogue) of the Springer numbers. In Section~\ref{dist-sec} we find distribution of every single minima/maxima statistic on alternating permutations of even and odd lengths, and record the result in Proposition~\ref{prop1} and Theorem~\ref{dist-single-pattern-thm}. In Section~\ref{joint-distr-mmp-sec} we find joint distribution of the statistics (MMP(0,1,0,0), MMP(1,0,0,0)) (resp., (MMP(0,0,1,0), MMP(0,0,0,1))) on up-down and down-up permutations of even and odd lengths. In Section~\ref{joint-distr-sec} we find joint distribution of the statistics $(\lrmax,\rlmax)$ (resp., $(\lrmin,\rlmin)$) on $UD_{2n}$, $UD_{2n-1}$, $DU_{2n}$ and $DU_{2n-1}$. In Section~\ref{POPs-sec} we find the number of permutations in $UD_{2n}$, $UD_{2n+1}$, $DU_{2n}$ and $DU_{2n+1}$ that avoid any POP in Figure~\ref{pic-Bk}.  Finally, in Section~\ref{Concluding-sec} we provide concluding remarks and state open problems. 

\section{A new combinatorial interpretation of the Springer numbers}\label{Springer-sec}
\noindent
In this section, we show that  the number of up-down permutations of even length fixed under reverse and complement is counted by the Springer numbers, which was conjectured by Callan~\cite{Callan}. Let $UD^{rc}_{2n}$ denote the set of these permutations of length $2n$. For $\pi=\pi_1\ldots\pi_{2n}\in UD^{rc}_{2n}$, we have
$$\pi_1\ldots\pi_{2n}=\pi_1\pi_2\cdots\pi_n(2n+1-\pi_n)\cdots (2n+1-\pi_2)(2n+1-\pi_1)$$ 
so that $\pi$ is uniquely determined by choosing $\pi_1\ldots\pi_n$. Note that if $\pi_i=2n$, then $i$ is even and the element  $1$ must be in odd position $2n+1-i$. Also, $\pi_1\ldots\pi_n$ either contains $2n$ or $1$, but not both. More generally, $\pi_1\ldots\pi_n$ contains exactly one element from the pair $\{i,2n+1-i\}$ for any $i$, $1\leq i\leq n$. But then we can see that 
\begin{equation}\label{rec-1} b_n=\sum_{k=0}^{n-1}2^k\binom{n-1}{k}E_kb_{n-k-1}\end{equation}
where $b_n$ is the number of permutations in $UD^{rc}_{2n}$, and $E_k$ is the number of up-down permutations of length $k$ discussed in Subsection~\ref{Euler-numbers-subsec}. Indeed, letting $k$ be the number of elements to the left of the element 1 or $2n$, whichever of them can be found in $\pi_1\ldots\pi_n$, $0\leq k\leq n-1$, we can choose $\pi_1\ldots\pi_k$ by 
\begin{itemize}
\item first selecting $k$ pairs of numbers  $\{i,2n+1-i\}$ in ${n-1\choose k}$ ways, 
\item then deciding, in $2^k$ ways, if the smaller or the larger element in each selected pair is to be used in $\pi_1\ldots \pi_k$,
\item then forming $\pi_1\ldots\pi_k$ out of the chosen elements  in $E_k$ ways as it can be any up-down permutation (this will fix automatically the choice of $\pi_{2n-k+1}\pi_{2n-k+2}\ldots\pi_{2n}$), and finally 
\item observing that $\pi_{k+2}\pi_{k+3}\ldots\pi_{2n-k-1}$ (resp., $\pi_{2n-k-1}\pi_{2n-k-2}\ldots \pi_{k+2}$), in case $2n$ (resp., $1$) is included in $\pi_1\ldots\pi_k$, can be any of the permutations counted by $b_{n-k-1}$ and formed from the elements not used in $\pi_1\ldots\pi_k$ and in $\pi_{2n-k+1}\pi_{2n-k+2}\ldots\pi_{2n}$. 
\end{itemize}

Replacing $n$ by $n+1$ in (\ref{rec-1}), then multiplying both sides of the equation by $\frac{t^n}{n!}$ (the power of $t$ gives half-length of permutations in $UD^{rc}_{2n}$) and summing over all $n\geq 0$, we obtain
%
$$\sum_{n\geq 0}b_{n+1}\frac{t^{n}}{n!}=\sum_{n\geq 0}\sum_{k=0}^{n}2^k\binom{n}{k}E_kb_{n-k}\frac{t^{n}}{n!}, \mbox{ or equivalently,}$$
\begin{equation}\label{useful-eq-1}
\frac{\mbox{d}}{\mbox{d} t}\sum_{n\geq 0}b_{n+1}\frac{t^{n+1}}{(n+1)!}=\sum_{n\geq 0}\sum_{k=0}^{n}2^kE_k\frac{t^{k}}{k!}b_{n-k}\frac{t^{n-k}}{(n-k)!}.
\end{equation}
Letting $P(t)=\sum_{n\geq 0}b_n\frac{t^n}{n!}$, we obtain
\begin{equation}\label{diff-eq-1}
\frac{\mbox{d}}{\mbox{d} t}P(t)=E(2t)P(t).
\end{equation}
By (\ref{E_n}),
$E(2t)=\sec 2t+\tan 2t$,  and since $P(0)=b_0=1$, we obtain the desired result that 
\begin{equation}\label{B(t)-formula}
P(t)=\frac{1}{\cos t-\sin t}.
\end{equation}

\subsection{$q$-analogues of the Springer numbers}\label{q-Springer-sec}
\noindent
The {\em extreme elements} are the largest and the smallest elements in a permutation. For $\pi=\pi_1\ldots\pi_{2n}\in UD^{rc}_{2n}$, let  lle$(\pi)$ be the number of elements in $\pi$ strictly to the left of the left extreme element; the respective statistic is referred to as LLE. For example, lle$(17463528)=0$, lle$(28463517)=1$ and lle$(34172856)=2$. Letting variable $q$ record the value of LLE, and following our steps in derivation of (\ref{useful-eq-1}), this equation becomes
\begin{equation}\label{useful-eq-2}
\frac{\partial}{\partial t}\sum_{n\geq 0}b_{n+1}(q)\frac{t^{n+1}}{(n+1)!}=\sum_{n\geq 0}\sum_{k=0}^{n}(2q)^kE_k\frac{t^{k}}{k!}b_{n-k}(1)\frac{t^{n-k}}{(n-k)!},
\end{equation}
where $b_n(q)=\sum_{\pi\in UD^{rc}_{2n}}q^{\mbox{lle}(\pi)}$ and $b_{n-k}(1)=b_{n-k}$. The ODE (\ref{diff-eq-1}) then becomes
$$\frac{\partial}{\partial t}Q(t,q)=E(2qt)B(t)$$
where $Q(t,q)=\sum_{n\geq 0}b_n(q)\frac{t^n}{n!}$ and $B(t)$  is given by (\ref{B(t)-formula}), and therefore 
\begin{equation}\label{B(t,q)-form}
Q(t,q)=\int_{0}^{t}\frac{\sec 2qz + \tan 2qz}{\cos z - \sin z}dz.
\end{equation}
The initial terms in $Q(t,q)$, when expanding in $t$, are
$$t + (1+2q)\frac{t^2}{2!}+(3+4q+4q^2)\frac{t^3}{3!}+(11+18q+12q^2+16q^3)\frac{t^4}{4!}+$$
$$(57+88q+72q^2+64q^3+80q^4)\frac{t^5}{5!}+(361+570q+440q^2+480q^3+400q^4+512q^5)\frac{t^6}{6!}+\cdots.$$
An alternative to the statistic LLE is the statistic BE defined as half of the number of elements between the extreme elements in $UD^{rc}_{2n}$. We let be$(\pi)$ be the value of BE on $\pi\in UD^{rc}_{2n}$. For example, be$(47381625)=0$, be$(57163824)=1$ and be$(15372648)=3$. Letting variable $p$ record the value of BE, and following our steps in derivation of (\ref{useful-eq-1}), this equation becomes
\begin{equation}\label{useful-eq-3}
\frac{\partial}{\partial t}\sum_{n\geq 0}c_{n+1}(p)\frac{t^{n+1}}{(n+1)!}=\sum_{n\geq 0}\sum_{k=0}^{n}2^kE_k\frac{t^{k}}{k!}b_{n-k}(1)\frac{(pt)^{n-k}}{(n-k)!},
\end{equation}
where $c_n(p)=\sum_{\pi\in UD^{rc}_{2n}}p^{\mbox{be}(\pi)}$. The ODE (\ref{diff-eq-1}) then becomes
$$\frac{\partial}{\partial t}U(t,p)=E(2t)B(pt)$$
where $U(t,p)=\sum_{n\geq 0}c_n(p)\frac{t^n}{n!}$, and therefore 
\begin{equation}\label{U(t,q)-formula}
U(t,p)=\int_{0}^{t}\frac{\sec 2z + \tan 2z}{\cos pz - \sin pz}dz.
\end{equation}
We note that by definition, for $\pi\in DU_{2n}$, lle$(\pi)+$be$(\pi)=n-1$, which resembles the property of the classical statistics asc and des (the number of ascents and descents) on the set of all permutations. Hence, the coefficient of $q^it^{n}$ in $Q(t,q)$ is equal to that of $p^{n-1-i}t^{n}$ in $U(t,p)$.

\subsection{$(p,q)$-analogue of the Springer numbers}\label{pq-Springer-sec}
\noindent
We note that the equations (\ref{useful-eq-2}) and (\ref{useful-eq-3}) can be combined as
\begin{equation}\label{useful-eq-4}
\frac{\partial}{\partial t}\sum_{n\geq 0}d_{n+1}(p,q)\frac{t^{n+1}}{(n+1)!}=\sum_{n\geq 0}\sum_{k=0}^{n}(2q)^kE_k\frac{t^{k}}{k!}b_{n-k}(1,1)\frac{(pt)^{n-k}}{(n-k)!},
\end{equation}
where $d_n(p,q)=\sum_{\pi\in UD^{rc}_{2n}}p^{\mbox{be}(\pi)}q^{\mbox{lle}(\pi)}$ and $b_{n-k}(1,1)=b_{n-k}$. The ODE (\ref{diff-eq-1}) then becomes
$$\frac{\partial}{\partial t}W(t,p,q)=E(2qt)B(pt)$$
where $W(t,p,q)=\sum_{n\geq 0}d_n(p,q)\frac{t^n}{n!}$, and therefore 
\begin{equation}\label{W(t,q)-formula}
W(t,p,q)=\int_{0}^{t}\frac{\sec 2qz + \tan 2qz}{\cos pz - \sin pz}dz
\end{equation}
which gives a $(p,q)$-analogue of the Springer numbers. 

\section{Distribution of a single minima/maxima statistic}\label{dist-sec}
\noindent
In this section, we find distribution of each minima/maxima statistic on up-down and down-up permutations of even and odd lengths (16 distributions in total). There are  four different distribution formulas presented in Theorem~\ref{dist-single-pattern-thm}, however, Proposition~\ref{prop1} needs to be used to identify which formula corresponds to the statistic/type of alternating permutations/parity of length in question.   

 For $n\geq 1$, we let \\[-0.8cm]
 \begin{center}
\begin{tabular}{ccc}	
$F^{(1)}_{2n}(q)=\sum_{\pi \in UD_{2n}}q^{\rlmax(\pi)}$ & \ \ & $F^{(2)}_{2n-1}(q)=\sum_{\pi \in UD_{2n-1}}q^{\rlmax(\pi)}$ \\[3mm]
	$F^{(3)}_{2n}(q)=\sum_{\pi \in DU_{2n}}q^{\rlmax(\pi)}$ & & $F^{(4)}_{2n-1}(q)=\sum_{\pi \in DU_{2n-1}}q^{\rlmax(\pi)}$
\end{tabular} 
\end{center}

\begin{prop}\label{prop1}
	For all $n\geq 1$,
	\begin{itemize}
		\item[{\upshape 1.}] $	F^{(1)}_{2n}(q)
		=\sum_{\pi \in DU_{2n}}q^{\lrmax(\pi)}
		=\sum_{\pi \in DU_{2n}}q^{\rlmin(\pi)}
		=\sum_{\pi \in UD_{2n}}q^{\lrmin(\pi)}$.
		\item[{\upshape 2.}] $	F^{(2)}_{2n-1}(q)
		=\sum_{\pi \in UD_{2n-1}}q^{\lrmax(\pi)}
		=\sum_{\pi \in DU_{2n-1}}q^{\rlmin(\pi)}
		=\sum_{\pi \in DU_{2n-1}}q^{\lrmin(\pi)}$.
		\item[{\upshape 3.}] $	F^{(3)}_{2n}(q)
		=\sum_{\pi \in UD_{2n}}q^{\lrmax(\pi)}
		=\sum_{\pi \in UD_{2n}}q^{\rlmin(\pi)}
		=\sum_{\pi \in DU_{2n}}q^{\lrmin(\pi)}$.
		\item[{\upshape 4.}] $	F^{(4)}_{2n-1}(q)
		=\sum_{\pi \in DU_{2n-1}}q^{\lrmax(\pi)}
		=\sum_{\pi \in UD_{2n-1}}q^{\rlmin(\pi)}
		=\sum_{\pi \in UD_{2n-1}}q^{\lrmin(\pi)}$.
	\end{itemize}	
\end{prop}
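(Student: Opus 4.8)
The plan is to establish all the claimed equalities by systematically applying the three symmetry operations available on permutations — reverse, complement, and the composition $rc$ — and tracking how each of the four maxima/minima statistics transforms under them, together with how the up-down and down-up conditions transform. The starting observation is the elementary dictionary: for any permutation $\pi$, applying the reverse $\pi\mapsto\pi^r$ interchanges left-to-right maxima with right-to-left maxima and left-to-right minima with right-to-left minima (since reading the word backwards swaps ``to the left of'' with ``to the right of''); applying the complement $\pi\mapsto\pi^c$ interchanges maxima with minima of the same handedness (left-to-right maxima $\leftrightarrow$ left-to-right minima, right-to-left maxima $\leftrightarrow$ right-to-left minima), because $\pi_i$ is larger than everything to one side iff $n+1-\pi_i$ is smaller than everything to that side. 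Consequently $rc$ interchanges $\lrmax\leftrightarrow\rlmin$ and $\lrmin\leftrightarrow\rlmax$.

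Next I would record the effect of these operations on the alternating-permutation classes. Reverse sends an up-down permutation of even length to a down-up permutation of even length, and an up-down permutation of odd length to an up-down permutation of odd length (and symmetrically with $UD\leftrightarrow DU$); this is because reversing flips every inequality in the pattern $\pi_1<\pi_2>\pi_3<\cdots$, and whether the resulting first relation is $<$ or $>$ depends on the parity of the length. Complement sends up-down to down-up regardless of parity (it flips every inequality but preserves the order of reading), and $rc$ therefore sends $UD_{2n}$ to itself, $UD_{2n-1}$ to $DU_{2n-1}$, etc. I would tabulate these six facts (three operations $\times$ two parities, or equivalently present the action of the group $\{\mathrm{id},r,c,rc\}$ on the set of ``class + statistic'' pairs) so that each desired identity in items 1–4 becomes an instance of ``$X$ applied to a bijection $\phi\colon A\to B$ sending statistic $s$ to statistic $s'$, hence $\sum_{\pi\in A}q^{s(\pi)}=\sum_{\sigma\in B}q^{s'(\sigma)}$.'' For example, item 1's equality $\sum_{UD_{2n}}q^{\rlmax}=\sum_{DU_{2n}}q^{\lrmax}$ follows from reverse ($UD_{2n}\to DU_{2n}$, $\rlmax\to\lrmax$); the equality with $\sum_{DU_{2n}}q^{\rlmin}$ follows from complement applied to $UD_{2n}$ ($\to DU_{2n}$, $\rlmax\to\rlmin$); and the equality with $\sum_{UD_{2n}}q^{\lrmin}$ follows from $rc$ applied to $UD_{2n}$ (which maps $UD_{2n}$ to itself, $\rlmax\to\lrmin$). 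Items 2–4 are handled the same way, with the odd-length cases using that reverse fixes the $UD$/$DU$ type while complement and $rc$ swap it.

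I do not anticipate a genuine obstacle here — the statement is essentially a bookkeeping lemma whose content is that the $24$ a priori distinct distributions collapse to the $4$ groups shown. The only point demanding a little care is getting the parity behavior of reverse correct and being consistent about it: one must check, e.g., that $\pi\in UD_{2n}$ has $\pi^r\in DU_{2n}$ while $\pi\in UD_{2n-1}$ has $\pi^r\in UD_{2n-1}$, and likewise that $rc$ preserves $UD_{2n}$ but swaps $UD_{2n-1}$ with $DU_{2n-1}$ — consistent with the statement already used in Section~\ref{Springer-sec} that $UD^{rc}_{2n}$ is nonempty. I would write the proof as: (i) state the three transformation rules for the statistics with one-line justifications; (ii) state the transformation rules for the two classes by parity; (iii) verify that composing these yields exactly the twelve nontrivial equalities listed (four items, three equalities each), pointing to the specific operation used for each. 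This makes the proof short and entirely mechanical once the dictionary in (i)–(ii) is in place.
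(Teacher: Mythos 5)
Your proposal is correct and follows essentially the same route as the paper: both rest on the identity $\rlmax(\pi)=\lrmax(\pi^r)=\rlmin(\pi^c)=\lrmin(\pi^{rc})$ together with the (parity-sensitive) action of reverse, complement, and their composition on the classes $UD_n$ and $DU_n$. Your write-up is somewhat more explicit about the parity bookkeeping for the odd-length cases, which the paper leaves to the reader, but the underlying argument is identical.
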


\begin{proof}
	For any permutation $\pi \in S_{n}$, it is clear that
	$$\rlmax(\pi)=\lrmax(\pi^r)=\rlmin(\pi^c)=\lrmin(\pi^{rc}).$$
This observation gives part 1, since 
$$\pi \in UD_{2n}\Longleftrightarrow\pi^r \in DU_{2n}\Longleftrightarrow\pi^c \in DU_{2n}\Longleftrightarrow\pi^{rc} \in UD_{2n}.$$
The proofs for parts 2, 3, and 4 follow similar lines, and hence are omitted.
\end{proof}

By Proposition~\ref{prop1}, the study of distributions of  $\lrmax$, $\rlmax$, $\lrmin$ and $\rlmin$ in the sets of up-down and down-up permutations of even and odd lengths can be reduced to the study of the following four generating functions:
\begin{center}
\begin{tabular}{ccc}
	$F^{(1)}(t, q)=1+\sum_{n\geq 1}F^{(1)}_{2n}(q)\frac{t^{2n}}{(2n)!}$ & \ \ &
	$F^{(2)}(t, q)=\sum_{n\geq 1}F^{(2)}_{2n-1}(q)\frac{t^{2n-1}}{(2n-1)!}$ \\[3mm]
	$F^{(3)}(t, q)=1+\sum_{n\geq 1}F^{(3)}_{2n}(q)\frac{t^{2n}}{(2n)!}$ & & 
	$F^{(4)}(t, q)=\sum_{n\geq 1}F^{(4)}_{2n-1}(q)\frac{t^{2n-1}}{(2n-1)!}$
\end{tabular}
\end{center}

We use Theorem~1 in \cite{KitRem2012} to prove the following theorem.

\begin{thm}\label{dist-single-pattern-thm}
	We have
	\begin{eqnarray*}
	F^{(1)}(t, q)&=&(\sec (t))^{q},\\
	F^{(2)}(t, q)&=&(\sec (t))^{q}\int_{0}^{qt}(\sec z/q)^{-q}dz,\\
	F^{(3)}(t, q)&=&1+\int_{0}^{qt}(\sec y/q)^{1+q}\int_{0}^{y}(\sec z/q)^{q}dzdy, \\
	F^{(4)}(t, q)&=&\int_{0}^{qt}(\sec z/q)^{1+q}dz.
	\end{eqnarray*}
\end{thm}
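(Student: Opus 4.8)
The plan is to set up a system of differential equations for the four generating functions $F^{(i)}(t,q)$ by a standard ``insertion of the largest element'' (or equivalently, first-return) decomposition of alternating permutations, tracking the $\rlmax$ statistic. The cleanest route is to invoke Theorem~1 of \cite{KitRem2012}, which records the distribution of $\mmp^{(1,0,0,0)}$ — equivalently the number of \emph{non}-right-to-left maxima — on up-down and down-up permutations of each parity. If $\pi\in S_m$ has $k$ right-to-left maxima then it has $m-k$ non-right-to-left maxima, so $\sum_\pi q^{\rlmax(\pi)} = q^{m}\sum_\pi (1/q)^{\mmp^{(1,0,0,0)}(\pi)}$; substituting $q\mapsto 1/q$ and rescaling $t\mapsto qt$ in the four generating functions from \cite{KitRem2012} should convert their formulas into the four displayed here. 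Concretely, if $G(t,q)$ denotes the exponential generating function for $q^{\mmp^{(1,0,0,0)}}$ on (say) $UD_{2n}$, then $F^{(1)}(t,q)=G(qt,1/q)$, and one must check that $G(t,q)=(\sec(t/q))^{?}$-type expression indeed produces $(\sec t)^q$ after this substitution. I would carry out this bookkeeping for each of the four cases.

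First I would recall precisely the four formulas from Theorem~1 of \cite{KitRem2012} for the distribution of $\mmp^{(1,0,0,0)}$ (equivalently, of non-$\rlmax$) on $UD_{2n}$, $UD_{2n-1}$, $DU_{2n}$, $DU_{2n-1}$; these are presumably of the same shape — powers of secant, possibly integrated once or twice. Second, for each case I would apply the reciprocal-and-rescale transformation $F^{(i)}(t,q) = (\text{something})\cdot G_i(qt,1/q)$ dictated by $\rlmax(\pi) = m - \mmp^{(1,0,0,0)}(\pi)$ on length-$m$ permutations, being careful that the ``$q^m$'' factor gets absorbed correctly into the $t$-rescaling since $t^m/m!$ carries the length. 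Third, I would verify the base cases ($F^{(1)}(0,q)=F^{(3)}(0,q)=1$, $F^{(2)}$ and $F^{(4)}$ vanish at $t=0$) and check the first few coefficients against small alternating permutations (e.g. $UD_2=\{12\}$ has one permutation with $\rlmax=2$, giving $q^2$; $UD_4$ has five permutations, and $(\sec t)^q = 1 + q\,t^2/2 + (3q^2-2q)\cdot\ldots$ — one expands $(\sec t)^q = \exp(q\log\sec t)$ and compares). Alternatively, and perhaps more self-containedly, one can prove the theorem directly: decompose an up-down permutation according to the position of its maximum and derive that $\frac{\partial}{\partial t}$ of the relevant generating function satisfies an ODE like $F' = (\text{power of }\sec)\cdot(\text{lower-order piece})$, exactly mirroring the derivation of \eqref{diff-eq-1}–\eqref{B(t)-formula} in Section~\ref{Springer-sec}; the boundary terms from where the maximum sits are what produce the $1+q$ exponents in $F^{(3)}$ and $F^{(4)}$.

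The main obstacle is the second step: getting the substitution $q\leftrightarrow 1/q$, $t\leftrightarrow qt$ exactly right so that the length-grading factor $q^m$ is consistently accounted for, and confirming that the integrals $\int_0^{qt}(\sec z/q)^{\pm q\text{ or }1+q}\,dz$ in \cite{KitRem2012}'s formulas transform into precisely the stated integrals (the inner/outer structure of the double integral in $F^{(3)}$, and the single integral with the $(\sec t)^q$ prefactor in $F^{(2)}$, must survive the change of variables unscathed). A secondary technical point is justifying termwise manipulation of these transcendental generating functions (convergence near $t=0$), but this is routine since all series are formal power series in $t$ with polynomial-in-$q$ coefficients. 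Once the correspondence with \cite{KitRem2012} is pinned down, each of the four formulas follows by a short calculation, so I would expect the write-up to be dominated by carefully stating that correspondence and the change of variables, with the verification of $F^{(1)}(t,q)=(\sec t)^q$ done first as the model case.
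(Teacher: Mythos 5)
Your proposal follows essentially the same route as the paper: both use the identity $\mmp^{(1,0,0,0)}(\pi)=n-\rlmax(\pi)$ to convert the four generating functions of Theorem~1 in \cite{KitRem2012} into $F^{(1)},\dots,F^{(4)}$ via the substitution $q\mapsto 1/q$, $t\mapsto qt$ (e.g.\ $(\sec(qt))^{1/q}=F^{(1)}(qt,q^{-1})$, hence $F^{(1)}(t,q)=(\sec t)^q$). The bookkeeping you flag as the main obstacle is exactly the short calculation the paper carries out for $F^{(1)}$ and then omits for the other three cases.
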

\begin{proof}
	For any $\pi \in S_n$, we have $\mmp^{(1,0,0,0)}(\pi)=n-\rlmax(\pi)$. The distributions of the statistic MMP$(1,0,0,0)$  on alternating permutations of even and odd lengths are given in Theorem 1 in \cite{KitRem2012}. In particular, the distribution of MMP$(1,0,0,0)$ on up-down permutations of even length is 
	\begin{align*}
	(\sec (qt))^{1/q} &= 1+\sum_{n\geq 1}\sum_{\pi\in UD_{2n}}q^{\mmp^{(1,0,0,0)}(\pi)}\frac{t^{2n}}{(2n)!}
	=1+\sum_{n\geq1}^{}\sum_{\pi\in UD_{2n}}q^{2n-\rlmax(\pi)}\frac{t^{2n}}{(2n)!}\\
	&=1+\sum_{n\geq 1}^{}\sum_{\pi\in UD_{2n}}(q^{-1})^{\rlmax(\pi)}\frac{(qt)^{2n}}{(2n)!}
	=F^{(1)}(qt, q^{-1})
	\end{align*}
	Hence, $F^{(1)}(t, q)=(\sec (t))^{q}$. 	By similar arguments (and using Theorem 1 in \cite{KitRem2012}), we obtain the formulas for $F^{(2)}(t, q)$, $F^{(3)}(t, q)$ and $F^{(4)}(t, q)$.
\end{proof}

\section{Joint distributions of MMP statistics}
\noindent
In this section, we find joint distribution of the statistics (MMP(0,1,0,0), MMP(1,0,0,0)) (resp., (MMP(0,0,1,0), MMP(0,0,0,1))) on up-down and down-up permutations of even and odd lengths (8 distributions in total). There are  four different distribution formulas presented in Theorem~\ref{joint-dist-mmp-thm}, however, Proposition~\ref{prop2} needs to be used to identify which formula corresponds to the pair of statistics/type of alternating permutations/parity of length in question.   

For $n \geq 1$, we let\\[-0.9cm]
\begin{footnotesize}
\begin{center}
\begin{tabular}{ccc}
$A_{2n}(p,q)=\sum_{\pi\in UD_{2n}}p^{\mmp^{(0,1,0,0)}(\pi)}q^{\mmp^{(1,0,0,0)}(\pi)} $& & $B_{2n-1}(p,q)=\sum_{\pi\in UD_{2n-1}}p^{\mmp^{(0,1,0,0)}(\pi)}q^{\mmp^{(1,0,0,0)}(\pi)}$\\[3mm]
$C_{2n}(p,q)=\sum_{\pi\in DU_{2n}}p^{\mmp^{(0,1,0,0)}(\pi)}q^{\mmp^{(1,0,0,0)}(\pi)}$ & &   $D_{2n-1}(p,q)=\sum_{\pi\in DU_{2n-1}}p^{\mmp^{(0,1,0,0)}(\pi)}q^{\mmp^{(1,0,0,0)}(\pi)}$
\end{tabular}
\end{center}
\end{footnotesize}
\begin{prop}\label{prop2}
	For all $n\geq 1$,
	\begin{itemize}
		\item[{\upshape 1.}] $A_{2n}(p,q)
		=\sum\limits_{\pi \in DU_{2n}}p^{\mmp^{(1,0,0,0)}(\pi)}q^{\mmp^{(0,1,0,0)}(\pi)}
		=\sum\limits_{\pi \in DU_{2n}}p^{\mmp^{(0,0,1,0)}(\pi)}q^{\mmp^{(0,0,0,1)}(\pi)}
		\\=\sum\limits_{\pi \in UD_{2n}}p^{\mmp^{(0,0,0,1)}(\pi)}q^{\mmp^{(0,0,1,0)}(\pi)}$.
		\item[{\upshape 2.}] $	B_{2n-1}(p,q)
		=\sum\limits_{\pi \in UD_{2n-1}}p^{\mmp^{(1,0,0,0)}(\pi)}q^{\mmp^{(0,1,0,0)}(\pi)}
		=\sum\limits_{\pi \in DU_{2n-1}}p^{\mmp^{(0,0,1,0)}(\pi)}q^{\mmp^{(0,0,0,1)}(\pi)}
		\\=\sum\limits_{\pi \in DU_{2n-1}}p^{\mmp^{(0,0,0,1)}(\pi)}q^{\mmp^{(0,0,1,0)}(\pi)}$.
		\item[{\upshape 3.}] $	C_{2n}(p,q)
		=\sum\limits_{\pi \in UD_{2n}}p^{\mmp^{(1,0,0,0)}(\pi)}q^{\mmp^{(0,1,0,0)}(\pi)}
		=\sum\limits_{\pi \in UD_{2n}}p^{\mmp^{(0,0,1,0)}(\pi)}q^{\mmp^{(0,0,0,1)}(\pi)}
		\\=\sum\limits_{\pi \in DU_{2n}}p^{\mmp^{(0,0,0,1)}(\pi)}q^{\mmp^{(0,0,1,0)}(\pi)}$.
		\item[{\upshape 4.}] $	D_{2n-1}(p,q)
		=\sum\limits_{\pi \in DU_{2n-1}}p^{\mmp^{(1,0,0,0)}(\pi)}q^{\mmp^{(0,1,0,0)}(\pi)}
		=\sum\limits_{\pi \in UD_{2n-1}}p^{\mmp^{(0,0,1,0)}(\pi)}q^{\mmp^{(0,0,0,1)}(\pi)}
		\\=\sum\limits_{\pi \in UD_{2n-1}}p^{\mmp^{(0,0,0,1)}(\pi)}q^{\mmp^{(0,0,1,0)}(\pi)}$.
	\end{itemize}	
\end{prop}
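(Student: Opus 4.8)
The plan is to prove Proposition~\ref{prop2} exactly as Proposition~\ref{prop1} was proved: by exhibiting bijections between the relevant classes of alternating permutations (given by the reverse and complement operations) under which the four quadrant statistics MMP$(1,0,0,0)$, MMP$(0,1,0,0)$, MMP$(0,0,1,0)$, MMP$(0,0,0,1)$ permute among themselves in a controlled way. First I would record the basic symmetry of the quadrant marked mesh patterns under the dihedral action on the graph $G(\pi)$. Reflecting $G(\pi)$ across a vertical line (the operation $\pi\mapsto\pi^r$) swaps quadrants I$\leftrightarrow$II and III$\leftrightarrow$IV; reflecting across a horizontal line (the operation $\pi\mapsto\pi^c$) swaps quadrants I$\leftrightarrow$IV and II$\leftrightarrow$III; and the composition $\pi\mapsto\pi^{rc}$ is the point reflection, swapping I$\leftrightarrow$III and II$\leftrightarrow$IV. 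Concretely this gives, for every $\pi\in S_n$,
\begin{align*}
\mmp^{(a,b,c,d)}(\pi^r)&=\mmp^{(b,a,d,c)}(\pi),\\
\mmp^{(a,b,c,d)}(\pi^c)&=\mmp^{(d,c,b,a)}(\pi),\\
\mmp^{(a,b,c,d)}(\pi^{rc})&=\mmp^{(c,d,a,b)}(\pi).
\end{align*}
In particular $(\mmp^{(0,1,0,0)},\mmp^{(1,0,0,0)})(\pi)$ transforms into $(\mmp^{(1,0,0,0)},\mmp^{(0,1,0,0)})$ under $r$, into $(\mmp^{(0,0,1,0)},\mmp^{(0,0,0,1)})$ under $c$, and into $(\mmp^{(0,0,0,1)},\mmp^{(0,0,1,0)})$ under $rc$.

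Next I would combine this with the action of $r$, $c$, $rc$ on the classes $UD_{2n}$ and $DU_{2n}$, namely the chain of equivalences already used in the proof of Proposition~\ref{prop1}: $\pi\in UD_{2n}\Leftrightarrow \pi^r\in DU_{2n}\Leftrightarrow \pi^c\in DU_{2n}\Leftrightarrow \pi^{rc}\in UD_{2n}$ (here parity of the length is used: for even length, reverse and complement each turn up-down into down-up). Putting the two ingredients together, the generating function $A_{2n}(p,q)=\sum_{\pi\in UD_{2n}}p^{\mmp^{(0,1,0,0)}(\pi)}q^{\mmp^{(1,0,0,0)}(\pi)}$ equals, via the substitution $\pi\mapsto\pi^r$, the sum $\sum_{\sigma\in DU_{2n}}p^{\mmp^{(1,0,0,0)}(\sigma)}q^{\mmp^{(0,1,0,0)}(\sigma)}$, which is the first claimed identity in part~1; via $\pi\mapsto\pi^c$ it equals $\sum_{\sigma\in DU_{2n}}p^{\mmp^{(0,0,1,0)}(\sigma)}q^{\mmp^{(0,0,0,1)}(\sigma)}$; and via $\pi\mapsto\pi^{rc}$ it equals $\sum_{\sigma\in UD_{2n}}p^{\mmp^{(0,0,0,1)}(\sigma)}q^{\mmp^{(0,0,1,0)}(\sigma)}$. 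This is exactly part~1. Parts~2, 3, and 4 are obtained by the same argument starting from $B_{2n-1}$, $C_{2n}$, $D_{2n-1}$ respectively, where for the odd-length cases ($B$ and $D$) one uses instead $\pi\in UD_{2n-1}\Leftrightarrow\pi^r\in UD_{2n-1}\Leftrightarrow\pi^c\in UD_{2n-1}\Leftrightarrow\pi^{rc}\in UD_{2n-1}$ (for odd length, reverse and complement each preserve the up-down shape, so one must be careful which class each image lands in) — and the statement of the proposition already encodes the correct bookkeeping of which class appears on each line.

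There is no real obstacle here: the only thing to get right is the careful tracking of how $r$, $c$, $rc$ act simultaneously on (i) the tuple of quadrant indices and (ii) the up-down/down-up shape as a function of the parity of the length, so that each of the three images in each part is assigned to the correct permutation class. Since this is entirely parallel to Proposition~\ref{prop1} and only the pattern-index bookkeeping is new, I would present part~1 in full using the three displayed reflection identities above together with the class-equivalence chain, and then remark that parts~2, 3, and 4 follow by the same reasoning (taking care, in the odd-length parts, that $\pi^r$ and $\pi^c$ of an up-down permutation of odd length are again up-down), hence omit those details.
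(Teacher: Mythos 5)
Your overall strategy is the right one and is evidently what the authors intend (the paper states Proposition~\ref{prop2} without proof, as the analogue of Proposition~\ref{prop1}): the three reflection identities $\mmp^{(a,b,c,d)}(\pi^r)=\mmp^{(b,a,d,c)}(\pi)$, $\mmp^{(a,b,c,d)}(\pi^c)=\mmp^{(d,c,b,a)}(\pi)$ and $\mmp^{(a,b,c,d)}(\pi^{rc})=\mmp^{(c,d,a,b)}(\pi)$ are correct, and your derivation of part~1 from them together with the even-length class equivalences is complete and correct.

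However, your handling of the odd-length cases contains a genuine error. You assert the chain $\pi\in UD_{2n-1}\Leftrightarrow\pi^r\in UD_{2n-1}\Leftrightarrow\pi^c\in UD_{2n-1}\Leftrightarrow\pi^{rc}\in UD_{2n-1}$ and claim that ``for odd length, reverse and complement each preserve the up-down shape.'' This is false for the complement: since $\pi_i<\pi_{i+1}$ if and only if $(\pi^c)_i>(\pi^c)_{i+1}$, the complement exchanges up-down and down-up permutations for \emph{every} length; only the reverse is parity-sensitive. The correct odd-length chain is $\pi\in UD_{2n-1}\Leftrightarrow\pi^r\in UD_{2n-1}\Leftrightarrow\pi^c\in DU_{2n-1}\Leftrightarrow\pi^{rc}\in DU_{2n-1}$, which is exactly the bookkeeping encoded in parts~2 and~4 of the proposition (the second and third sums there are over $DU_{2n-1}$, resp.\ $UD_{2n-1}$). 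If one followed your stated chain literally, the second and third equalities of part~2 would come out as sums over $UD_{2n-1}$; that is, one would ``prove'' $B_{2n-1}(p,q)=\sum_{\pi\in UD_{2n-1}}p^{\mmp^{(0,0,1,0)}(\pi)}q^{\mmp^{(0,0,0,1)}(\pi)}$, which by part~4 is $D_{2n-1}(p,q)$ and is not equal to $B_{2n-1}(p,q)$ in general. The fix is one line, but as written the step is wrong rather than merely terse.
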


By Proposition~\ref{prop2}, the study of joint distributions of  (MMP(0,1,0,0), MMP(1,0,0,0)) and  (MMP(0,0,1,0), MMP(0,0,0,1)) in the sets $UD_{2n}$, $UD_{2n-1}$, $DU_{2n}$ and $DU_{2n-1}$ can be reduced to the study of the following four generating functions:
\begin{center}
	\begin{tabular}{ccc}
		$A(t,p,q)=1+\sum_{n\geq 1}A_{2n}(p,q)\frac{t^{2n}}{(2n)!}$ & \ \ &
		$B(t,p,q)=\sum_{n\geq 1}B_{2n}(p,q)\frac{t^{2n}}{(2n)!}$ \\[3mm]
		$C(t,p,q)=1+\sum_{n\geq 1}C_{2n}(p,q)\frac{t^{2n}}{(2n)!}$ & & 
		$D(t,p,q)=\sum_{n\geq 1}D_{2n}(p,q)\frac{t^{2n}}{(2n)!}$
	\end{tabular}
\end{center}

The formulas for $A(t,p,q)$, $B(t,p,q)$, $C(t,p,q)$ and $D(t,p,q)$ are derived in Sections~\ref{A(t,p,q)-sec}--\ref{D(t,p,q)-sec}, respectively, and we summarize them in the following theorem. 
 
\begin{thm}\label{joint-dist-mmp-thm}
	We have
	\begin{eqnarray*}
	A(t,p,q)&=&\int_{0}^{t}\left[(\sec (pqs))^{\frac{1}{p}+\frac{1}{q}}\int_{0}^{qs}(\sec (pz))^{-\frac{1}{p}}dz\right]ds,\\
B(t,p,q)&=&t+\int_{0}^{t}\left[(\sec (pqs))^{\frac{1}{p}+\frac{1}{q}}\int_{0}^{qs}(\sec (pz))^{-\frac{1}{p}}dz   \int_{0}^{ps}(\sec (qz))^{-\frac{1}{q}}dz\right]ds,\\
	C(t,p,q)&=&\int_{0}^{t}\left[(\sec (pqs))^{\frac{1}{p}+\frac{1}{q}}\int_{0}^{ps}(\sec (qz))^{-\frac{1}{q}}dz\right]ds, \\
	D(t,p,q)&=&\int_{0}^{t}(\sec (pqz))^{\frac{1}{p}+\frac{1}{q}}dz.
	\end{eqnarray*}
\end{thm}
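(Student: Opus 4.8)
The plan is to set up, for each of the four generating functions $A(t,p,q),B(t,p,q),C(t,p,q),D(t,p,q)$, a differential equation obtained by deleting (or tracking the role of) the largest element $2n$ in an alternating permutation, exactly as in the proof of Theorem~1 of \cite{KitRem2012}, but now keeping track of two statistics simultaneously. Since $2n$ contributes to neither MMP$(1,0,0,0)$ (nothing lies above it) nor MMP$(0,1,0,0)$ for the same reason, it is an ``invisible'' letter for both statistics; however, its position controls how the quadrants of the remaining letters split. Write an up-down permutation of $[2n]$ by recording the position $2i$ of the letter $2n$ (it must sit in an even slot), which splits the word into a left block forming a (possibly empty) up-down permutation on its $2i-1$ letters and a right block forming a down-up permutation on the remaining $2n-2i$ letters, the two blocks being order-isomorphic to independent smaller alternating permutations. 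The key bookkeeping point is that each letter in the left block sees all $2n-2i$ letters of the right block in its quadrant~I (they lie to the right and are larger or smaller depending on value), and symmetrically for the right block — so the contribution of the ``cross'' pairs to MMP$(1,0,0,0)$ and MMP$(0,1,0,0)$ is determined purely by the block sizes, and factors cleanly. This is what produces the product of three integrands in $B$ and the product of two in $A$ and $C$.

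Concretely, I would first derive a system of coupled ODEs. Guided by Proposition~\ref{prop2}, the natural unknowns are the EGFs $A,B,C,D$ above, together with auxiliary EGFs recording only MMP$(1,0,0,0)$ on up-down permutations and only MMP$(0,1,0,0)$ on down-up permutations (these are essentially the single-statistic series from Theorem~\ref{dist-single-pattern-thm}, i.e.\ powers of secant). Differentiating $A(t,p,q)$ with respect to $t$ removes the distinguished last letter and expresses $A'$ as a product of the ``left-block'' series (an up-down EGF weighted so that each left letter carries an extra factor recording the right-block letters it dominates in quadrant~I, which amounts to substituting $t\mapsto qt$ or $t\mapsto pt$ appropriately) and the ``right-block'' series. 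Carrying this out and using the closed forms $(\sec(qt))^{1/q}$, etc., for the single-statistic pieces from Theorem~\ref{dist-single-pattern-thm}, one gets
\begin{align*}
A'(t,p,q)&=(\sec(pqt))^{\frac1p+\frac1q}\int_0^{qt}(\sec(pz))^{-\frac1p}\,dz,
\end{align*}
and integrating from $0$ (with $A(0,p,q)=1$, matching the constant term) yields the stated formula. The formulas for $C$ and $D$ follow the same recipe with the roles of up-down/down-up and of $p,q$ interchanged as dictated by Proposition~\ref{prop2}; for $D$ the right block is empty-or-trivial, so only a single secant power survives and one integral suffices. For $B$ (odd length), deleting $2n$ leaves one block of even length and one of odd length, and both blocks contribute a ``dominated-letters'' integral factor, giving the product of two integrals inside the outer integral; the extra additive $t$ accounts for the length-$1$ permutation, which has no valid position for a ``largest'' deletion step.

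The main obstacle I anticipate is the careful verification that the cross-block contributions to the two MMP statistics really do factor as clean substitutions $s\mapsto ps$ or $s\mapsto qs$ in the block EGFs, rather than introducing genuinely bivariate coupling between the blocks — i.e.\ checking that within the left block the count of quadrant-I points never mixes left-block and right-block letters in a way that depends on their interleaving. This is where one must argue that every right-block letter lies strictly to the right of every left-block letter and is either entirely above or the distinction is already captured by MMP$(0,1,0,0)$ versus MMP$(1,0,0,0)$, so the bivariate weight of a whole permutation is the product of the bivariate weights of its blocks times a monomial $p^\alpha q^\beta$ depending only on the two block sizes. Once that factorization is justified, assembling the ODEs, invoking Theorem~\ref{dist-single-pattern-thm} for the single-statistic ingredients, and integrating is routine; matching initial conditions ($A(0)=C(0)=1$, $B,D$ start at $t$ or $0$) then pins down the constants and completes the proof. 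The remaining bookkeeping — confirming the exponents $\tfrac1p+\tfrac1q$ and $-\tfrac1p,-\tfrac1q$ and the integration variables — is a direct computation that I would relegate to the section-by-section derivations in Sections~\ref{A(t,p,q)-sec}--\ref{D(t,p,q)-sec}.
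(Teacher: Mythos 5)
Your strategy is essentially the paper's: split an alternating permutation at the position of its largest letter, observe that the bivariate weight factors into the two blocks' weights times a monomial in $p,q$ depending only on the block sizes, turn the resulting convolution into an ODE for each of $A,B,C,D$, and integrate using the single-statistic closed forms from Kitaev--Remmel. That said, three local claims in your sketch are wrong as stated and should be repaired before the argument is written out. First, when $\pi\in UD_{2n}$ has its maximum at position $2k$, the right block $\pi_{2k+1}\cdots\pi_{2n}$ begins with an ascent and is therefore an \emph{up-down} permutation of even length, not a down-up one; getting this wrong changes which generating function appears in the convolution. Second, your justification of the cross-block factorization --- that each left-block letter ``sees all $2n-2i$ letters of the right block in its quadrant I'' --- is false (it sees only the larger ones, a quantity that depends on the interleaving of values); the correct and much simpler reason is that MMP$(1,0,0,0)$ is a threshold condition requiring only \emph{one} point in quadrant I, and the single letter $2n$ already supplies it for every left-block letter, while the MMP$(0,1,0,0)$ status of a left-block letter depends only on letters to its left, hence only on the left block. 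This is exactly the point you flag as the main obstacle, so the gap is acknowledged but your proposed resolution would not go through literally. Third, for $D$ the right block is not ``empty-or-trivial'': the maximum of a down-up permutation of odd length sits at an arbitrary odd position, and both blocks are even-length alternating permutations; $D$ comes out as a single integral of a secant power simply because both even-length factors $A(\cdot,p,1)$ and $A(\cdot,1,q)$ are pure secant powers with no inner integrals, so their product collapses to $(\sec(pqt))^{1/p+1/q}$. With these corrections the derivation matches Sections~\ref{A(t,p,q)-sec}--\ref{D(t,p,q)-sec} of the paper.
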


\subsection{The generating function $A(t,p,q)$}\label{A(t,p,q)-sec}
\noindent If $\pi=\pi_1\cdots\pi_{2n}\in UD_{2n}$, then $2n$ must occur in one of the positions $2, 4,\ldots , 2n$. Let $UD_{2n}^{(2k)}$
denote the set of permutations $\pi\in UD_{2n}$ such that $\pi_{2k}=2n$. A schematic diagram of a permutation
in $UD_{2n}^{(2k)}$ is pictured in Figure~\ref{fig:ud2k2n}.
\begin{figure}
\begin{center}
\includegraphics[scale=0.6]{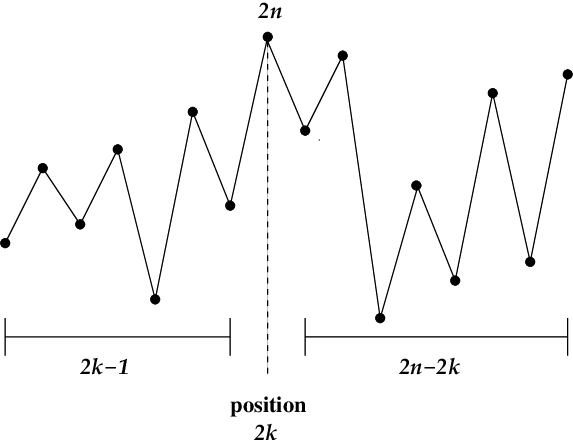}
\end{center}
\caption{The graph of a permutation $\pi \in UD_{2n}^{(2k)}$.}\label{fig:ud2k2n}
\end{figure}

Note that there are $\binom{2n-1}{2k-1}$
ways to pick the elements which occur to the left of position
$2k$ in such $\pi$. These elements form a permutation in $UD_{2k-1}$, and each of them contributes to the statistic MMP(1,0,0,0). Thus the contribution of the elements to the left of position $2k$ in $\sum_{\pi\in UD_{2n}^{(2k)}}p^{\mmp^{(0,1,0,0)}(\pi)}q^{\mmp^{(1,0,0,0)}(\pi)}$ is $q^{2k-1}B_{2k-1}(p,1)$.
The elements to the right of position $2k$  form a
permutation in $UD_{2n-2k}$, and each of these elements contributes to MMP(0,1,0,0). Since the elements to the left of position $2k$ have no effect
on whether an element to the right of position $2k$ contributes to MMP(1,0,0,0), and the elements to the right of position $2k$ have no effect
on whether an element to the left of position $2k$ contributes to MMP(0,1,0,0), 
 it follows
that the contribution of the elements to the right of position $2k$ in$\sum_{\pi\in UD_{2n}^{(2k)}}p^{\mmp^{(0,1,0,0)}(\pi)}q^{\mmp^{(1,0,0,0)}(\pi)}$ is $p^{2n-2k}A_{2n-2k}(1,q)$.
 It thus follows that
 $$A_{2n}(p,q)=\sum_{k=1}^{n}\binom{2n-1}{2k-1}q^{2k-1}B_{2k-1}(p,1)p^{2n-2k}A_{2n-2k}(1,q)$$
 or, equivalently, 
 \begin{equation}\label{eq1}
 \frac{A_{2n}(p,q)}{(2n-1)!}=\sum_{k=1}^{n}\frac{q^{2k-1}B_{2k-1}(p,1)}{(2k-1)!}\frac{p^{2n-2k}A_{2n-2k}(1,q)}{(2n-2k)!}.
 \end{equation}
Multiplying both sides of (\ref{eq1}) by $t^{2n-1}$ and summing for $n \geq 1$ , we see that
\begin{align*}
\sum_{n \geq 1}\frac{A_{2n}(p,q)t^{2n-1}}{(2n-1)!}&=\left(\sum_{n \geq 1}\frac{(qt)^{2n-1}B_{2n-1}(p,1)}{(2n-1)!}\right)\left(\sum_{n \geq 0}\frac{(pt)^{2n}A_{2n}(1,q)}{(2n)!}\right)\\
&=B(qt,p,1)A(pt,1,q)
\end{align*}
so that
\begin{equation}\label{A=BA-equat-1}\frac{\partial}{\partial t}{A(t,p,q)}=B(qt,p,1)A(pt,1,q)\end{equation}
with  initial condition $A(0,p,q) = 1$. By Proposition 1 and Theorem 1 in \cite{KitRem2012},
$$A(t,1,q)=(\sec (qt))^{\frac{1}{q}},$$
$$ B(t,p,1)=(\sec (pt))^{\frac{1}{p}}\int_{0}^{t}(\sec (pz))^{-\frac{1}{p}}dz.$$
The solution to (\ref{A=BA-equat-1}) is
   $$A(t,p,q)=\int_{0}^{t}\left[(\sec (pqs))^{\frac{1}{p}+\frac{1}{q}}\int_{0}^{qs}(\sec (pz))^{-\frac{1}{p}}dz\right]ds.$$

\subsection{The generating function $B(t,p,q)$}\label{B(t,p,q)-sec}

\noindent
If $\pi=\pi_1\cdots\pi_{2n+1}\in UD_{2n+1}$, then $2n$ must occur in one of the positions $2, 4,\ldots , 2n$. Let $UD_{2n+1}^{(2k)}$
denote the set of permutations $\pi\in UD_{2n+1}$ such that $\pi_{2k}=2n+1.$ A schematic diagram of a permutation in $UD_{2n+1}^{(2k)}$
is in Figure~\ref{fig:ud2k2n+1}.
\begin{figure}
	\begin{center}
		\includegraphics[scale=0.6]{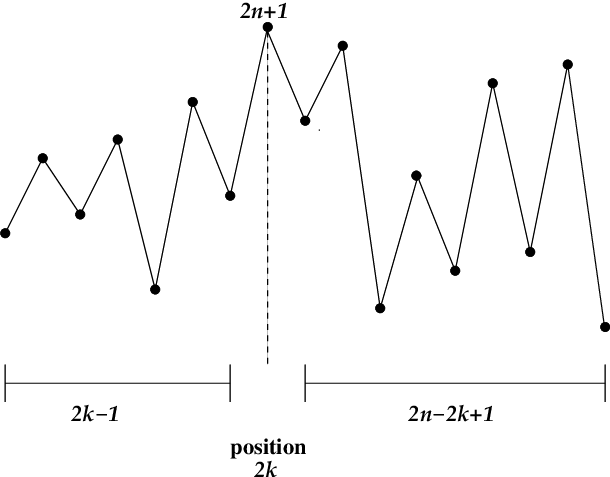}
	\end{center}
	\caption{The graph of a permutation $\pi \in UD_{2n+1}^{(2k)}$.}\label{fig:ud2k2n+1}
\end{figure}

There are $\binom{2n}{2k-1}$
ways to pick the elements occurring to the left of position
$2k$ in such $\pi$. These elements form a permutation in $UD_{2k-1}$, and each of them contributes to MMP(1,0,0,0). Thus the contribution of the elements to the left of position $2k$ in $\sum_{\pi\in UD_{2n+1}^{(2k)}}p^{\mmp^{(0,1,0,0)}(\pi)}q^{\mmp^{(1,0,0,0)}(\pi)}$ is $q^{2k-1}B_{2k-1}(p,1)$.
The elements to the right of position $2k$  form a
permutation in $UD_{2n-2k+1}$, and each of these elements contributes to MMP(0,1,0,0). Since the elements to the left (resp., right) of position $2k$ have no effect
on whether an element to the right (resp., left) of position $2k$ contributes to MMP(1,0,0,0) (resp., MMP(0,1,0,0)), it follows that the contribution of the elements to the right of position $2k$ in $\sum_{\pi\in UD_{2n}^{(2k)}}p^{\mmp^{(0,1,0,0)}(\pi)}q^{\mmp^{(1,0,0,0)}(\pi)}$ is $p^{2n-2k+1}B_{2n-2k+1}(1,q)$.
It thus follows that for $n\geq 1$,  
$$B_{2n+1}(p,q)=\sum_{k=1}^{n}\binom{2n}{2k-1}q^{2k-1}B_{2k-1}(p,1)p^{2n-2k}B_{2n-2k}(1,q).$$
Hence for $n\geq 1$, 
\begin{equation}\label{eq2}
\frac{B_{2n+1}(p,q)}{(2n)!}=\sum_{k=1}^{n}\frac{q^{2k-1}B_{2k-1}(p,1)}{(2k-1)!}\frac{p^{2n-2k+1}B_{2n-2k+1}(1,q)}{(2n-2k+1)!}.
\end{equation}
Multiplying both sides of (\ref{eq2}) by $t^{2n}$, summing for $n \geq 1$, and taking into account that $B_1(p,1)=1$, we see that
\begin{align*}
\sum_{n \geq 0}\frac{B_{2n+1}(p,q)t^{2n}}{(2n)!}&=1+\left(\sum_{n \geq 0}\frac{(qt)^{2n+1}B_{2n+1}(p,1)}{(2n+1)!}\right)\left(\sum_{n \geq 0}\frac{(pt)^{2n+1}B_{2n+1}(1,q)}{(2n+1)!}\right)\\
&=B(qt,p,1)B(pt,1,q)
\end{align*}
so that
\begin{equation}\label{diff-B}
\frac{\partial}{\partial t}{B(t,p,q)}=1+B(qt,p,1)B(pt,1,q)
\end{equation}
with initial condition  $B(0,p,q) = 0$.
By Proposition 1 and Theorem 1 in \cite{KitRem2012},
$$ B(t,p,1)=(\sec (pt))^{\frac{1}{p}}\int_{0}^{t}(\sec (pz))^{-\frac{1}{p}}dz,$$
$$ B(t,1,q)=(\sec (qt))^{\frac{1}{q}}\int_{0}^{t}(\sec (qz))^{-\frac{1}{q}}dz.$$
The solution to (\ref{diff-B}) is then
$$B(t,p,q)=t+\int_{0}^{t}\left[(\sec (pqs))^{\frac{1}{p}+\frac{1}{q}}\int_{0}^{qs}(\sec (pz))^{-\frac{1}{p}}dz   \int_{0}^{ps}(\sec (qz))^{-\frac{1}{q}}dz\right]ds$$

\subsection{The generating function $C(t,p,q)$}\label{C(t,p,q)-sec}
\noindent
If $\pi=\pi_1\cdots\pi_{2n}\in DU_{2n}$, then $2n$ must occur in one of the positions $1, 3,\ldots , 2n-1$. Let $DU_{2n}^{(2k+1)}$
denote the set of permutations $\pi\in DU_{2n}$ such that $\pi_{2k+1}=2n.$ A schematic diagram of a permutation in $DU_{2n}^{(2k+1)}$
is in Figure~\ref{fig:du2k2n}.
\begin{figure}
	\begin{center}
		\includegraphics[scale=0.6]{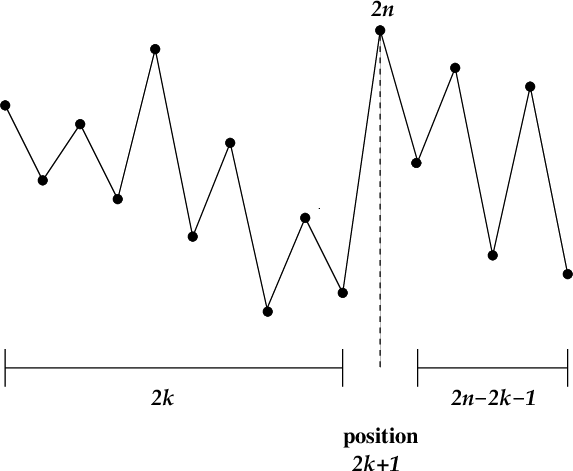}
	\end{center}
	\caption{The graph of a permutation $\pi \in UD_{2n+1}^{(2k)}$.}\label{fig:du2k2n}
\end{figure}

Note that there are $\binom{2n-1}{2k}$
ways to pick the elements occurring to the left of position
$2k+1$ in such $\pi$. These elements  form a  permutation in $DU_{2k}$, and each of them contributes to MMP(1,0,0,0). Thus the contribution of the elements to the left of position $2k+1$ in $\sum_{\pi\in DU_{2n}^{(2k+1)}}p^{\mmp^{(0,1,0,0)}(\pi)}q^{\mmp^{(1,0,0,0)}(\pi)}$ is $q^{2k}A_{2k}(p,1)$.
The elements to the right of position $2k+1$  form a 
permutation in $UD_{2n-2k-1}$, and each of these elements contributes to MMP(0,1,0,0). Since the elements to the left of position $2k+1$ have no effect
on whether an element to the right of position $2k+1$ contributes to MMP(1,0,0,0), and the elements to the right of position $2k+1$ have no effect on whether an element to the left of position $2k+1$ contributes to MMP(0,1,0,0), 
it follows
that the contribution of the elements to the right of position $2k+1$ in $\sum_{\pi\in DU_{2n}^{(2k+1)}}p^{\mmp^{(0,1,0,0)}(\pi)}q^{\mmp^{(1,0,0,0)}(\pi)}$ is $p^{2n-2k-1}B_{2n-2k-1}(1,q)$.
It thus follows that
$$C_{2n}(p,q)=\sum_{k=0}^{n-1}\binom{2n-1}{2k}q^{2k}A_{2k}(p,1)p^{2n-2k-1}B_{2n-2k-1}(1,q)$$
or, equivalently, 
\begin{equation}\label{eq3}
\frac{C_{2n}(p,q)}{(2n-1)!}=\sum_{k=0}^{n-1}\frac{q^{2k}A_{2k}(p,1)}{(2k)!}\frac{p^{2n-2k-1}B_{2n-2k-1}(1,q)}{(2n-2k-1)!}.
\end{equation}
Multiplying both sides of (\ref{eq3}) by $t^{2n-1}$ and summing for $n \geq 1$ , we see that
\begin{align*}
\sum_{n \geq 1}\frac{C_{2n}(p,q)t^{2n-1}}{(2n-1)!}&=\left(\sum_{n \geq 0}\frac{(qt)^{2n}A_{2n}(p,1)}{(2n)!}\right)\left(\sum_{n \geq 1}\frac{(pt)^{2n-1}B_{2n-1}(1,q)}{(2n-1)!}\right)\\
&=A(qt,p,1)B(pt,1,q).
\end{align*}
So that
$$\frac{\partial}{\partial t}{C(t,p,q)}=A(qt,p,1)B(pt,1,q)$$
with initial condition $C(0,p,q) = 1$. 
By Proposition 1 and Theorem 1 in \cite{KitRem2012},
$$A(t,p,1)=(\sec (pt))^{\frac{1}{p}},$$$$ B(t,1,q)=(\sec (qt))^{\frac{1}{q}}\int_{0}^{t}(\sec (qz))^{-\frac{1}{q}}dz.$$
Hence we have $$C(t,p,q)=\int_{0}^{t}\left[(\sec (pqs))^{\frac{1}{p}+\frac{1}{q}}\int_{0}^{ps}(\sec (qz))^{-\frac{1}{q}}dz\right]ds.$$

\subsection{The generating function $D(t,p,q)$}\label{D(t,p,q)-sec}

\noindent
If $\pi=\pi_1\cdots\pi_{2n+1}\in DU_{2n+1}$, then $2n+1$ must occur in one of the positions $1, 3,\ldots , 2n+1$. Let $DU_{2n+1}^{(2k+1)}$
denote the set of permutations $\pi\in DU_{2n+1}$ such that $\pi_{2k+1}=2n+1.$ A schematic diagram of a permutation in $DU_{2n+1}^{(2k+1)}$
is pictured in Figure~\ref{fig:du2k2n+1}.
\begin{figure}
	\begin{center}
		\includegraphics[scale=0.6]{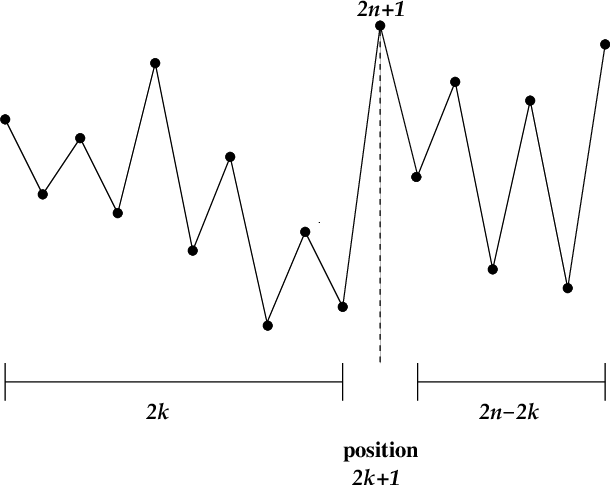}
	\end{center}
	\caption{The graph of a permutation $\pi \in UD_{2n+1}^{(2k)}$.}\label{fig:du2k2n+1}
\end{figure}

Note that there are $\binom{2n}{2k}$
ways to pick the elements which occur to the left of position
$2k+1$ in such $\pi$. These elements form a permutation in $DU_{2k}$, and each of them  contributes to MMP(1,0,0,0). Thus the contribution of the elements to the left of position $2k+1$ in $\sum_{\pi\in DU_{2n+1}^{(2k+1)}}p^{\mmp^{(0,1,0,0)}(\pi)}q^{\mmp^{(1,0,0,0)}(\pi)}$ is $q^{2k}C_{2k}(p,1)=p^{2k}A_{2k}(p,1)$.
The elements to the right of position $2k+1$  form a
permutation in $UD_{2n-2k}$. Each of these elements contributes to MMP(0,1,0,0). Since the elements to the left (resp., right) of position $2k+1$ have no effect
on whether an element to the right (resp., left) of position $2k+1$ contributes to MMP(1,0,0,0) (resp., MMP(0,1,0,0)), 
it follows
that the contribution of the elements to the right of position $2k+1$ in $\sum_{\pi\in DU_{2n+1}^{(2k+1)}}p^{\mmp^{(0,1,0,0)}(\pi)}q^{\mmp^{(1,0,0,0)}(\pi)}$ is $p^{2n-2k}A_{2n-2k}(1,q)$. Therefore,
$$D_{2n+1}(p,q)=\sum_{k=0}^{n}\binom{2n}{2k}p^{2k}A_{2k}(p,1)p^{2n-2k}A_{2n-2k}(1,q).$$
Hence, for $n\geq 1$,
\begin{equation}\label{eq4}
\frac{D_{2n+1}(p,q)}{(2n)!}=\sum_{k=0}^{n}\frac{p^{2k}A_{2k}(p,1)}{(2k)!}\frac{p^{2n-2k}A_{2n-2k}(1,q)}{(2n-2k)!}.
\end{equation}
Multiplying both sides of (\ref{eq4}) by $t^{2n}$ and summing for $n \geq 0$ , we see that
\begin{align*}
\sum_{n \geq 0}\frac{D_{2n+1}(p,q)t^{2n}}{(2n)!}&=\left(\sum_{n \geq 0}\frac{(pt)^{2n}A_{2n}(p,1)}{(2n)!}\right)\left(\sum_{n \geq 0}\frac{(pt)^{2n}A_{2n}(1,q)}{(2n)!}\right)\\
&=A(pt,p,1)A(pt,1,q)
\end{align*}
so that
\begin{equation*}\label{diff-D}
\frac{\partial}{\partial t}{D(t,p,q)}=A(pt,1,q)A(pt,1,q)
\end{equation*}
 with initial condition $D(0, p,q) = 0$.
By Proposition 1 and Theorem 1 in \cite{KitRem2012},
 $$A(t,1,q)=(\sec (qt))^{\frac{1}{q}}.$$
 Hence, we have
$$D(t,p,q)=\int_{0}^{t}(\sec (pqz))^{\frac{1}{p}+\frac{1}{q}}dz.$$

\section{Joint distributions of the maxima or minima statistics}\label{joint-distr-sec}
\noindent
In this section, we find joint distribution of the statistics $(\lrmax,\rlmax)$ (resp., $(\lrmin,\rlmin)$) on up-down and down-up permutations of even and odd lengths (8 distributions in total). This generalizes our results in Section~\ref{dist-sec}. There are  four different distribution formulas presented in Theorem~\ref{joint-dist-pattern-thm}, however, we need Proposition~\ref{prop2} to identify which formula corresponds to the pair of statistics/type of alternating permutations/parity of length in question.   

For $n \geq 1$, we let \\[-0.7cm]
 \begin{center}
	\begin{tabular}{ccc}	
		$G^{(1)}_{2n}(p,q)=\sum_{\pi \in UD_{2n}}p^{\lrmax(\pi)}q^{\rlmax(\pi)}$ & \ \ & $G^{(2)}_{2n-1}(p,q)=\sum_{\pi \in UD_{2n-1}}p^{\lrmax(\pi)}q^{\rlmax(\pi)}$ \\[3mm]
		$G^{(3)}_{2n}(p,q)=\sum_{\pi \in DU_{2n}}p^{\lrmax(\pi)}q^{\rlmax(\pi)}$ & & $G^{(4)}_{2n-1}(p,q)=\sum_{\pi \in DU_{2n-1}}p^{\lrmax(\pi)}q^{\rlmax(\pi)}$
	\end{tabular} 
\end{center}
\begin{prop}\label{prop3}
	For all $n\geq 1$,
	\begin{footnotesize}
	\begin{itemize}
		\item[{\upshape 1.}] $	G^{(1)}_{2n}(p,q)
		=\sum\limits_{\pi \in DU_{2n}}p^{\rlmax(\pi)}q^{\lrmax(\pi)}
		=\sum\limits_{\pi \in DU_{2n}}p^{\lrmin(\pi)}q^{\rlmin(\pi)}
		=\sum\limits_{\pi \in UD_{2n}}p^{\rlmin(\pi)}q^{\lrmin(\pi)}$.
		\item[{\upshape 2.}] $	G^{(2)}_{2n-1}(p,q)
		=\sum\limits_{\pi \in UD_{2n-1}}p^{\rlmax(\pi)}q^{\lrmax(\pi)}
		=\sum\limits_{\pi \in DU_{2n-1}}p^{\lrmin(\pi)}q^{\rlmin(\pi)}
		=\sum\limits_{\pi \in DU_{2n-1}}p^{\rlmin(\pi)}q^{\lrmin(\pi)}$.
		\item[{\upshape 3.}] $	G^{(3)}_{2n}(p,q)
		=\sum\limits_{\pi \in UD_{2n}}p^{\rlmax(\pi)}q^{\lrmax(\pi)}
		=\sum\limits_{\pi \in UD_{2n}}p^{\lrmin(\pi)}q^{\rlmin(\pi)}
		=\sum\limits_{\pi \in DU_{2n}}p^{\rlmin(\pi)}q^{\lrmin(\pi)}$.
		\item[{\upshape 4.}] $	G^{(4)}_{2n-1}(p,q)
		=\sum\limits_{\pi \in DU_{2n-1}}p^{\rlmax(\pi)}q^{\lrmax(\pi)}
		=\sum\limits_{\pi \in UD_{2n-1}}p^{\lrmin(\pi)}q^{\rlmin(\pi)}
		=\sum\limits_{\pi \in UD_{2n-1}}p^{\rlmin(\pi)}q^{\lrmin(\pi)}$.
	\end{itemize}	
	\end{footnotesize}
\end{prop}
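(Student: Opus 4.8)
The plan is to establish Proposition~\ref{prop3} by exactly the same kind of symmetry argument used for Proposition~\ref{prop1}, now tracking a pair of statistics rather than a single one. The essential observation is that for any permutation $\pi\in S_n$ the reverse and complement operations permute the four statistics $\lrmax$, $\rlmax$, $\lrmin$, $\rlmin$ in a controlled way; concretely, one has the identities
\[
\lrmax(\pi)=\rlmax(\pi^r)=\lrmin(\pi^c)=\rlmin(\pi^{rc}),\qquad
\rlmax(\pi)=\lrmax(\pi^r)=\rlmin(\pi^c)=\lrmin(\pi^{rc}).
\]
First I would record these (they are immediate from the definitions, since reversing a word swaps ``to the left'' with ``to the right'' and complementing swaps ``greater'' with ``smaller''). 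Applying them simultaneously to the exponents $p$ and $q$ shows that the generating polynomial $\sum_\pi p^{\lrmax(\pi)}q^{\rlmax(\pi)}$ over a set $\mathcal{A}\subseteq S_n$ equals $\sum_{\pi\in\mathcal{A}^r} p^{\rlmax(\pi)}q^{\lrmax(\pi)}$, equals $\sum_{\pi\in\mathcal{A}^c} p^{\lrmin(\pi)}q^{\rlmin(\pi)}$, and equals $\sum_{\pi\in\mathcal{A}^{rc}} p^{\rlmin(\pi)}q^{\lrmin(\pi)}$.

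The second ingredient is the behavior of the up-down/down-up property under these operations. As already used in Proposition~\ref{prop1}, for even length $2n$ we have
\[
\pi\in UD_{2n}\iff \pi^r\in DU_{2n}\iff \pi^c\in DU_{2n}\iff \pi^{rc}\in UD_{2n},
\]
and for odd length $2n-1$ one instead gets
\[
\pi\in UD_{2n-1}\iff \pi^r\in UD_{2n-1}\iff \pi^c\in DU_{2n-1}\iff \pi^{rc}\in DU_{2n-1}.
\]
Combining the two ingredients: taking $\mathcal{A}=UD_{2n}$ in the even case turns the defining sum $G^{(1)}_{2n}(p,q)$ into the three displayed sums over $DU_{2n}$, $DU_{2n}$, $UD_{2n}$ respectively, which is exactly part~1. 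Part~3 is obtained the same way starting from $\mathcal{A}=DU_{2n}$, and parts~2 and~4 from $\mathcal{A}=UD_{2n-1}$ and $\mathcal{A}=DU_{2n-1}$, using the odd-length incidences above. Since each of the four parts is proved by one invocation of the same two facts, I would write out part~1 in full and remark that parts~2--4 follow the same lines, mirroring the style of the proof of Proposition~\ref{prop1}.

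I do not expect any genuine obstacle here; the only thing requiring a little care is bookkeeping — making sure that in each of the sixteen equalities the correct one of $\{r,c,rc\}$ is applied and that it lands the permutation set in the stated $UD$ or $DU$ class with the correct parity, since the even and odd cases behave differently under $r$ (reverse fixes the $UD$/$DU$ type for odd length but swaps it for even length). The ``hard part,'' such as it is, is purely organizational: presenting the map from $(\lrmax,\rlmax)$-data to $(\lrmin,\rlmin)$-data cleanly enough that the reader sees all twelve non-trivial identities at once rather than checking them one by one.
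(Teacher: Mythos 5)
Your proposal is correct and takes essentially the same approach the paper intends: the paper omits an explicit proof of Proposition 5.1, but it is precisely the bivariate version of the symmetry argument written out for Proposition 3.1, which is exactly what you carry out. All the identities check out, including the parity-dependent behavior of reverse on the $UD$/$DU$ classes that you correctly flag as the only delicate point.
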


Note that $G^{(2)}_{2n-1}(p,q)=G^{(2)}_{2n-1}(q,p)$ and $G^{(4)}_{2n-1}(p,q)=G^{(4)}_{2n-1}(q,p)$.

By Proposition~\ref{prop3}, the study of joint distributions of  $(\lrmax,\rlmax)$ and  $(\lrmin,\rlmin)$ in the sets $UD_{2n}$, $UD_{2n-1}$, $DU_{2n}$ and $DU_{2n-1}$  can be reduced to the study of the following four generating functions:
\begin{center}
	\begin{tabular}{ccc}
		$G^{(1)}(t,p,q)=1+\sum_{n\geq 1}G^{(1)}_{2n}(p,q)\frac{t^{2n}}{(2n)!}$ & \ \ &
		$G^{(2)}(t,p,q)=\sum_{n\geq 1}G^{(2)}_{2n-1}(p,q)\frac{t^{2n-1}}{(2n-1)!}$ \\[3mm]
		$G^{(3)}(t,p,q)=1+\sum_{n\geq 1}G^{(3)}_{2n}(p,q)\frac{t^{2n}}{(2n)!}$ & & 
		$G^{(4)}(t,p,q)=\sum_{n\geq 1}G^{(4)}_{2n-1}(p,q)\frac{t^{2n-1}}{(2n-1)!}$
	\end{tabular}
\end{center}

\begin{thm}\label{joint-dist-pattern-thm}
	We have
	\begin{eqnarray*}
		G^{(1)}(t,p,q)&=&\int_{0}^{pqt}\left[(\sec (s/pq))^{p+q}\int_{0}^{s/q}(\sec (z/p))^{-p}dz\right]ds,\\
	G^{(2)}(t,p,q)&=&pqt+\int_{0}^{pqt}\left[(\sec (s/pq))^{p+q}\int_{0}^{s/q}(\sec (z/p))^{-p}dz   \int_{0}^{s/p}(\sec (z/q))^{-q}dz\right]ds,\\
	G^{(3)}(t,p,q)&=&\int_{0}^{pqt}\left[(\sec (s/pq))^{p+q}\int_{0}^{s/p}(\sec (z/q))^{-q}dz\right]ds, \\
	G^{(4)}(t,p,q)&=&\int_{0}^{pqt}(\sec (z/pq))^{p+q}dz.
	\end{eqnarray*}
\end{thm}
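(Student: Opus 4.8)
The plan is to relate the joint maxima statistics $(\lrmax,\rlmax)$ to the MMP statistics already handled in Theorem~\ref{joint-dist-mmp-thm} via the identities
$\lrmax(\pi)=n-\mmp^{(0,1,0,0)}(\pi)$ and $\rlmax(\pi)=n-\mmp^{(1,0,0,0)}(\pi)$ for $\pi\in S_n$, which hold because a position fails to be a left-to-right (resp.\ right-to-left) maximum exactly when it matches $\mathrm{MMP}(0,1,0,0)$ (resp.\ $\mathrm{MMP}(1,0,0,0)$). By Proposition~\ref{prop3} all eight joint distributions reduce to the four generating functions $G^{(1)},G^{(2)},G^{(3)},G^{(4)}$, so it suffices to express each of these in terms of $A,B,C,D$.

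First I would do the substitution at the level of coefficients. For instance, on $UD_{2n}$,
\[
G^{(1)}_{2n}(p,q)=\sum_{\pi\in UD_{2n}}p^{\lrmax(\pi)}q^{\rlmax(\pi)}=\sum_{\pi\in UD_{2n}}p^{2n-\mmp^{(0,1,0,0)}(\pi)}q^{2n-\mmp^{(1,0,0,0)}(\pi)}=(pq)^{2n}A_{2n}(p^{-1},q^{-1}).
\]
Translating to generating functions, this says $G^{(1)}(t,p,q)=A(pqt,\,p^{-1},\,q^{-1})$, and likewise $G^{(2)}(t,p,q)=B(pqt,p^{-1},q^{-1})$, $G^{(3)}(t,p,q)=C(pqt,p^{-1},q^{-1})$, $G^{(4)}(t,p,q)=D(pqt,p^{-1},q^{-1})$; one checks the bookkeeping of the extra odd factor of $pq$ in the odd-length cases is exactly what produces the leading term $pqt$ in $G^{(2)}$ and $G^{(4)}$, matching the $t$ terms in $B$ and $D$.

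Then I would simply substitute the closed forms of Theorem~\ref{joint-dist-mmp-thm} and simplify. Replacing $(t,p,q)$ by $(pqt,p^{-1},q^{-1})$ turns $\frac1p+\frac1q$ into $p+q$, turns $\sec(pqs)$ (with $s$ the dummy variable, itself rescaled) into $\sec(s/pq)$ after the change of variables $s\mapsto s/pq$ in the outer integral, and turns the inner exponents $-\frac1p,-\frac1q$ into $-p,-q$ with arguments $z/p$, $z/q$ after rescaling the inner dummy variables; the limits $qs$ and $ps$ become $s/q$ and $s/p$. This is a routine but slightly fiddly chain of substitutions in nested integrals, and that careful tracking of the rescaling of every dummy variable and integration limit is the only real obstacle—there is no new combinatorics. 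After carrying it out one obtains precisely the four displayed formulas for $G^{(1)},G^{(2)},G^{(3)},G^{(4)}$.
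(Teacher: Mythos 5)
Your proposal is correct and follows essentially the same route as the paper: both use the identities $\mmp^{(0,1,0,0)}(\pi)=n-\lrmax(\pi)$ and $\mmp^{(1,0,0,0)}(\pi)=n-\rlmax(\pi)$ to get $G^{(i)}(t,p,q)=A(pqt,p^{-1},q^{-1})$ (resp.\ $B,C,D$) and then substitute the closed forms from Theorem~\ref{joint-dist-mmp-thm}. The paper writes out only the $G^{(1)}$ case and omits the other three, exactly as you sketch them.
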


\begin{proof} We derive the formula for $G^{(1)}(t, p, q)$ using the function $A(t,p,q)$ in Theorem~\ref{joint-dist-mmp-thm}; similar derivations for  $G^{(2)}(t, p, q)$, $G^{(3)}(t, p, q)$, $G^{(4)}(t, p, q)$ using $B(t,p,q)$, $C(t,p,q)$, $D(t,p,q)$ in Theorem~\ref{joint-dist-mmp-thm}, respectively, are omitted. 

	For any $\pi \in S_n$, we have $\mmp^{(0,1,0,0)}(\pi)=n-\lrmax(\pi)$ and $\mmp^{(1,0,0,0)}(\pi)=n-\rlmax(\pi)$. Therefore,
	\begin{footnotesize}
	\begin{align*}
	A(t,p,q) &= 1+\sum_{n\geq 1}\sum_{\pi\in UD_{2n}}p^{\mmp(0,1,0,0)}q^{\mmp(1,0,0,0)(\pi)}\frac{t^{2n}}{(2n)!}
	=1+\sum_{n\geq1}^{}\sum_{\pi\in UD_{2n}}p^{2n-\lrmax(\pi)}q^{2n-\rlmax(\pi)}\frac{t^{2n}}{(2n)!}\\
	&=1+\sum_{n\geq 1}^{}\sum_{\pi\in UD_{2n}}(p^{-1})^{\lrmax(\pi)}(q^{-1})^{\rlmax(\pi)}\frac{(pqt)^{2n}}{(2n)!}
	=G^{(1)}(pqt, p^{-1},q^{-1}).
	\end{align*}
	\end{footnotesize}
	Hence, $G^{(1)}(t,p,q)=A(pqt,p^{-1},q^{-1})$. 
\end{proof}

\section{POP-avoiding alternating permutations}\label{POPs-sec}
\noindent 
In this section, we find the number of permutations in $UD_{2n}$, $UD_{2n+1}$, $DU_{2n}$ and $DU_{2n+1}$ that avoid any POP in Figure~\ref{pic-Bk}. Using the reverse and/or complement operations, we see that it is sufficient to consider the pattern 	

\vspace{-0.8cm}

\begin{center}
\begin{tikzpicture}[scale=0.5]
	
	\draw [line width=1](0,0)--(1.5,1.5);
	\draw [line width=1](1,0)--(1.5,1.5);
	\draw [line width=1](3,0)--(1.5,1.5);	
	
	\draw (0,0) node [scale=0.4, circle, draw,fill=black]{};
	\draw (1,0) node [scale=0.4, circle, draw,fill=black]{};
	\draw (3,0) node [scale=0.4, circle, draw,fill=black]{};
	\draw (1.5,1.5) node [scale=0.4, circle, draw,fill=black]{};
	
	\draw (1.75,0) node [scale=0.15, circle, draw,fill=black]{};
	\draw (2,0) node [scale=0.15, circle, draw,fill=black]{};
	\draw (2.25,0) node [scale=0.15, circle, draw,fill=black]{};
	
	\node [below] at (-0.9,1.5){\footnotesize{$\Lambda_k=$}};
		
	\node [below] at (0,-0.1){\footnotesize{$1$}};
	\node [below] at (1,-0.1){\footnotesize{$2$}};
	\node [below] at (3,-0.1){\footnotesize{$k-1$}};
	\node [above] at (1.5,1.5){\footnotesize{$k$}};
    \node [above] at (3.5,0.5){\footnotesize{.}};
	\end{tikzpicture}
\end{center}
\vspace{-0.7cm}
Table~\ref{tab-symmetries} gives a way of applying Theorem~\ref{POP-thm} to any POP in Figure~\ref{pic-Bk}. To avoid trivial cases, in the next theorem we assume $k\geq 3$.

\begin{thm}\label{POP-thm} 
Let  $a(2n)$, $a(2n+1)$, $b(2n)$ and $b(2n+1)$ be, respectively, the number of $\Lambda_k$-avoiding permutations in $UD_{2n}$, $UD_{2n+1}$, $DU_{2n}$ and $DU_{2n+1}$, where $k\geq 3$. Then
\begin{eqnarray}
a(2n)&=&\sum_{i=1}^{\lfloor\frac{k-1}{2}\rfloor}(-1)^{i+1}{k-1\choose 2i}a(2n-2i),\label{a(2n)} \\
a(2n+1)&=&\sum_{i=0}^{\lfloor\frac{k-2}{2}\rfloor}(-1)^{i}{k-1\choose 2i+1}a(2n-2i),\label{a(2n+1)} \\
b(2n)&=&\sum_{i=0}^{\lfloor\frac{k-2}{2}\rfloor}(-1)^{i}{k-1\choose 2i+1}b(2n-2i-1),\label{b(2n)}  \\
b(2n+1) &=& \sum_{i=1}^{\lfloor\frac{k-1}{2}\rfloor}(-1)^{i+1}{k-1\choose 2i}b(2n-2i+ 1),\label{b(2n+1)}
\end{eqnarray}
The base cases are $a(n)=b(n)=E_n$ for $n<k$, where $E_n$ is given by (\ref{E_n}).
\end{thm}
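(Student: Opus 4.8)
The plan is to set up a sign-reversing / inclusion–exclusion recurrence for $\Lambda_k$-avoiding alternating permutations by tracking the position of the largest element, just as in the derivations of the joint generating functions in Sections~\ref{A(t,p,q)-sec}--\ref{D(t,p,q)-sec}, but now working purely on the enumerative level. First I would recall the key local fact already used in~\eqref{POP-distr}: an occurrence of $\Lambda_k$ in a permutation is a subsequence whose leftmost $k-1$ entries are increasing and whose last entry is the maximum of the $k$ chosen entries; equivalently, avoiding $\Lambda_k$ means there is no index $j$ and no increasing subsequence of length $k-1$ lying entirely to the left of position $j$ with all $k-1$ values below $\pi_j$. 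The clean consequence I would isolate is that $\pi$ avoids $\Lambda_k$ if and only if for every entry $\pi_j$, the entries to its left that are smaller than $\pi_j$ contain no increasing subsequence of length $k-1$; in particular, since an up-down or down-up permutation has very short increasing runs, the relevant obstruction is controlled by how many entries precede the maximum.

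Next I would condition on the position of the largest element $m$ (where $m=2n$, $2n+1$, etc.). In an up-down permutation of even length $2n$, the maximum sits in an even position $2k$, and the entries $\pi_1\cdots\pi_{2k-1}$ form an up-down permutation on a $(2k-1)$-subset while $\pi_{2k+1}\cdots\pi_{2n}$ form an up-down permutation on the complementary set; crucially every one of the $2k-1$ left entries is below $m=\pi_{2k}$, so the pair (any increasing subsequence of length $k-2$ among the left entries) together with $m$ would create a $\Lambda_{k-1}$-like configuration, and combined with one further entry we would get $\Lambda_k$. Working this out, $\Lambda_k$-avoidance forces $2k-1 \le$ the largest length of an increasing subsequence threshold, which for alternating permutations translates precisely into the constraint $k\le$ something, so only boundedly many values of $k$ contribute and the surviving terms, after applying inclusion–exclusion on the binomial coefficients $\binom{2n-1}{2k-1}$ to pass from ``choose the left block'' to a signed sum, collapse to~\eqref{a(2n)}. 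I would carry out the same split for the three remaining cases: $UD_{2n+1}$ with the maximum in an even position (giving~\eqref{a(2n+1)}), $DU_{2n}$ with the maximum in an odd position (giving~\eqref{b(2n)}), and $DU_{2n+1}$ with the maximum in an odd position (giving~\eqref{b(2n+1)}); each is a routine variant once the first is done. The base cases $a(n)=b(n)=E_n$ for $n<k$ hold because a permutation of length less than $k$ cannot contain a pattern of length $k$ at all, so every alternating permutation of such length is counted.

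An alternative, and probably cleaner, route is generating-functionological: translate each recurrence into a differential equation for the exponential generating function and match it against the structure already exposed in Theorem~\ref{dist-single-pattern-thm} and Theorem~\ref{joint-dist-mmp-thm}. Concretely, I would observe that the ``choose a left block of odd size $2k-1$, fill it with an arbitrary alternating permutation, and require the remainder to avoid $\Lambda_k$'' decomposition yields, after multiplying by $t^{m-1}/(m-1)!$ and summing, a convolution identity of the form $\frac{d}{dt}A_{\Lambda_k}(t) = P_{k}(t)\, A_{\Lambda_k}(t)$ (and similarly for the other three), where $P_k(t)$ is the truncated secant/tangent series $\sum_{j} E_{2j-1} t^{2j-1}/(2j-1)!$ cut off at degree $k-2$; the finite-order linear ODE with polynomial right-hand side is exactly equivalent to the finite linear recurrences~\eqref{a(2n)}--\eqref{b(2n+1)}, and the alternating binomial coefficients $(-1)^{i+1}\binom{k-1}{2i}$ are precisely the coefficients that appear when one clears $\sec$ and $\tan$ from the denominator. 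Either way, the main obstacle is the same: correctly pinning down, for each of the four parities/types, exactly which configurations of "$2k-1$ smaller entries sitting to the left of the maximum" do and do not create a $\Lambda_k$-occurrence, and then verifying that the inclusion–exclusion over the choice of the left block telescopes to the stated signed binomial sum rather than to some larger expression. I would spend most of the care there and treat the passage to the recurrence (or ODE) afterwards as bookkeeping.
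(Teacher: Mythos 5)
Your proposal goes wrong at the very first step: an occurrence of $\Lambda_k$ does \emph{not} require the $k-1$ bottom elements to form an increasing subsequence. In the poset $\Lambda_k$ the elements $1,\dots,k-1$ are pairwise incomparable, so an occurrence is simply any choice of $k-1$ entries lying to the left of, and below, some entry $\pi_j$, with no condition on their relative order --- this is exactly why the count used to derive (\ref{POP-distr}) is $\binom{j-1}{k-1}$, i.e.\ \emph{all} $(k-1)$-subsets of the preceding smaller entries. Consequently, $\pi$ avoids $\Lambda_k$ if and only if every entry has at most $k-2$ smaller entries to its left. Your characterization via increasing subsequences is strictly weaker and already fails on alternating permutations: $251634\in UD_6$ contains $\Lambda_4$ (take $2,5,1$ before $6$), yet no three of $2,5,1$ are increasing. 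Everything you build on ``short increasing runs'' therefore collapses; note also that an alternating permutation having short increasing \emph{runs} says nothing about its increasing \emph{subsequences}, which can have length about $n/2$.

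The second problem is structural. You condition on the position of the maximum, but the recurrences (\ref{a(2n)})--(\ref{b(2n+1)}) are not of that shape: the coefficients $\binom{k-1}{2i}$ count choices of $2i$ \emph{values} from $\{1,\dots,k-1\}$, not of elements placed to the left of the maximum, and the recurrence steps down by a bounded amount $2i\le k-1$ rather than by the (unbounded) position of the maximum. The paper's proof instead uses the correct characterization to force the last entry --- and hence the whole terminal increasing block --- to take values in $\{1,\dots,k-1\}$, peels off the final two (resp.\ one) entries, and runs inclusion--exclusion over the length of that terminal increasing block; this is precisely what produces the alternating binomial sums. Your decomposition around the maximum would additionally have to control occurrences straddling the cut (a top element to the right of the maximum whose $k-1$ smaller witnesses are partly on the left), which you never address, and your proposed first-order ODE $\frac{d}{dt}A_{\Lambda_k}=P_k(t)\,A_{\Lambda_k}$ would yield a full convolution recurrence, not the finite linear recurrences with fixed binomial coefficients stated in the theorem. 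Only your justification of the base cases $a(n)=b(n)=E_n$ for $n<k$ stands as written.
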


\begin{table}
\begin{center}
\begin{tabular}{c||c|c|c|c}
    &  
    \begin{tikzpicture}[scale=0.4]
	
	\draw [line width=1](0,0)--(1.5,1.5);
	\draw [line width=1](1,0)--(1.5,1.5);
	\draw [line width=1](3,0)--(1.5,1.5);	
	
	\draw (0,0) node [scale=0.4, circle, draw,fill=black]{};
	\draw (1,0) node [scale=0.4, circle, draw,fill=black]{};
	\draw (3,0) node [scale=0.4, circle, draw,fill=black]{};
	\draw (1.5,1.5) node [scale=0.4, circle, draw,fill=black]{};
	
	\draw (1.75,0) node [scale=0.15, circle, draw,fill=black]{};
	\draw (2,0) node [scale=0.15, circle, draw,fill=black]{};
	\draw (2.25,0) node [scale=0.15, circle, draw,fill=black]{};
		
	\node [below] at (0,-0.1){\tiny{$1$}};
	\node [below] at (1,-0.1){\tiny{$2$}};
	\node [below] at (3,-0.1){\tiny{$k-1$}};
	\node [above] at (1.5,1.5){\tiny{$k$}};
	\end{tikzpicture}
    
    & 
       \begin{tikzpicture}[scale=0.4]
	
	\draw [line width=1](0,0)--(1.5,1.5);
	\draw [line width=1](1,0)--(1.5,1.5);
	\draw [line width=1](3,0)--(1.5,1.5);	
	
	\draw (0,0) node [scale=0.4, circle, draw,fill=black]{};
	\draw (1,0) node [scale=0.4, circle, draw,fill=black]{};
	\draw (3,0) node [scale=0.4, circle, draw,fill=black]{};
	\draw (1.5,1.5) node [scale=0.4, circle, draw,fill=black]{};
	
	\draw (1.75,0) node [scale=0.15, circle, draw,fill=black]{};
	\draw (2,0) node [scale=0.15, circle, draw,fill=black]{};
	\draw (2.25,0) node [scale=0.15, circle, draw,fill=black]{};
		
	\node [below] at (0,-0.1){\tiny{$2$}};
	\node [below] at (1,-0.1){\tiny{$3$}};
	\node [below] at (3,-0.1){\tiny{$k$}};
	\node [above] at (1.5,1.5){\tiny{$1$}};
	\end{tikzpicture}
    
    & 
    
    \begin{tikzpicture}[scale=0.4]

\draw [line width=1](0,1.5)--(1.5,0);
\draw [line width=1](1,1.5)--(1.5,0);
\draw [line width=1](3,1.5)--(1.5,0);	

\draw (1.5,0) node [scale=0.4, circle, draw,fill=black]{};
\draw (1,1.5) node [scale=0.4, circle, draw,fill=black]{};
\draw (3,1.5) node [scale=0.4, circle, draw,fill=black]{};
\draw (0,1.5) node [scale=0.4, circle, draw,fill=black]{};

\draw (1.75,1.5) node [scale=0.15, circle, draw,fill=black]{};
\draw (2,1.5) node [scale=0.15, circle, draw,fill=black]{};
\draw (2.25,1.5) node [scale=0.15, circle, draw,fill=black]{};

\node [above] at (0,1.5){\tiny{$2$}};
\node [above] at (1,1.5){\tiny{$3$}};
\node [above] at (3,1.5){\tiny{$k$}};
\node [below] at (1.5,0){\tiny{$1$}};
\end{tikzpicture}
    
    &
    \begin{tikzpicture}[scale=0.4]

\draw [line width=1](0,1.5)--(1.5,0);
\draw [line width=1](1,1.5)--(1.5,0);
\draw [line width=1](3,1.5)--(1.5,0);	

\draw (1.5,0) node [scale=0.4, circle, draw,fill=black]{};
\draw (1,1.5) node [scale=0.4, circle, draw,fill=black]{};
\draw (3,1.5) node [scale=0.4, circle, draw,fill=black]{};
\draw (0,1.5) node [scale=0.4, circle, draw,fill=black]{};

\draw (1.75,1.5) node [scale=0.15, circle, draw,fill=black]{};
\draw (2,1.5) node [scale=0.15, circle, draw,fill=black]{};
\draw (2.25,1.5) node [scale=0.15, circle, draw,fill=black]{};

\node [above] at (0,1.5){\tiny{$1$}};
\node [above] at (1,1.5){\tiny{$2$}};
\node [above] at (3,1.5){\tiny{$k-1$}};
\node [below] at (1.5,0){\tiny{$k$}};
\end{tikzpicture}
    \\[-2mm]
\hline
\hline
$UD_{2n}$  & $a(2n)$ & $b(2n)$ & $b(2n)$ & $a(2n)$ \\
\hline
$UD_{2n+1}$  &  $a(2n+1)$ & $a(2n+1)$ & $b(2n+1)$ & $b(2n+1)$\\
\hline
$DU_{2n}$  &  $b(2n)$ & $a(2n)$ & $a(2n)$ & $b(2n)$ \\
\hline
$DU_{2n+1}$  &  $b(2n+1)$ & $b(2n+1)$ & $a(2n+1)$ & $a(2n+1)$
\end{tabular}
\end{center}
\caption{Application of the results in Theorem~\ref{POP-thm} to POPs in Figure~\ref{pic-Bk}. Columns 3, 4 and 5 are obtained, respectively, by applying to alternating permutations reverse, complement and composition of reverse and complement operations.}\label{tab-symmetries}
\end{table}

\begin{proof}
To derive (\ref{a(2n)}), let $\pi=\pi_1\ldots\pi_{2n}\in UD_{2n}$ (so that $\pi_{2n-1}<\pi_{2n}$) and $\pi$ avoids $\Lambda_k$. We will use the inclusion-exclusion method to derive the recurrence. Note that $\pi_{2n-1}, \pi_{2n}\in\{1,\ldots,k-1\}$ or else an occurrence of  $\Lambda_k$ involving $\pi_{2n}$ can be found in $\pi$, which is impossible. Hence, to form $\pi$, we can choose $\pi_{2n-1}\pi_{2n}$ in ${k-1\choose 2}$ ways and let $\pi_1\ldots\pi_{2n-2}$ be any permutation in $UD_{2n-2}$ (there are $a(2n-2)$ choices to form such a permutation). However, there are non-valid choices to construct $\pi$ in this way, namely, in some cases $\pi_{2n-3}\pi_{2n-2}\pi_{2n-1}\pi_{2n}$, formed from elements in $\{1,\ldots,k-1\}$, will be in increasing order, so we need to subtract those permutations. The number of such permutations is given by ${k-1\choose 4}$ choices for $\pi_{2n-3}\pi_{2n-2}\pi_{2n-1}\pi_{2n}$ multiplied by $a(2n-4)$ choices for $\pi_1\ldots\pi_{2n-4}$ in $UD_{2n-4}$. However, we subtracted too many permutations, and the permutations ending with six increasing elements from $\{1,\ldots,k-1\}$ need to be added back. Continuing in this way, we obtain (\ref{a(2n)}).

To derive (\ref{a(2n+1)}), let $\pi=\pi_1\ldots\pi_{2n+1}\in UD_{2n+1}$ (so that $\pi_{2n}>\pi_{2n+1}$) and $\pi$ avoids $\Lambda_k$. Once again, we will use the inclusion-exclusion method. Note that $\pi_{2n+1}\in\{1,\ldots,k-1\}$ or else an occurrence of  $\Lambda_k$ involving $\pi_{2n+1}$ can be found in $\pi$, which is impossible. Hence, to form $\pi$, we can choose $\pi_{2n+1}$ in ${k-1\choose 1}$ ways and let $\pi_1\ldots\pi_{2n}$ be any permutation in $UD_{2n}$ (there are $a(2n)$ choices to form such a permutation). However, there are non-valid choices to construct $\pi$ in this way, namely, in some cases $\pi_{2n-1}\pi_{2n}\pi_{2n+1}$, formed from elements in $\{1,\ldots,k-1\}$, will be in increasing order, so we need to subtract those permutations. The number of such permutations is given by ${k-1\choose 3}$ choices for $\pi_{2n-1}\pi_{2n}\pi_{2n+1}$ multiplied by $a(2n-2)$ choices for $\pi_1\ldots\pi_{2n-2}$ in $UD_{2n-2}$. The rest of our arguments are similar to the derivation of (\ref{a(2n)}).

Our proofs of (\ref{b(2n)}) and (\ref{b(2n+1)}) are similar to those of  (\ref{a(2n)}) and  (\ref{a(2n+1)}) and hence are omitted. 
\end{proof}

\section{Concluding remarks}\label{Concluding-sec}
\noindent
In this paper, we derive the (joint) distributions of maxima and minima statistics for up-down and down-up permutations of even and odd lengths. This refines classic enumeration results of Andr\'e~\cite{Andre1,Andre2} and introduces new $q$-analogues and $(p,q)$-analogues for the number of alternating permutations.  Also, we confirm Callan's conjecture that the number of up-down permutations of even length, fixed by reverse and complement, equals the Springer numbers. Our approach allows us to propose two $q$-analogues and a $(p,q)$-analogue for the Springer numbers. Furthermore, we enumerate alternating permutations that avoid any POP presented in Figure~\ref{pic-Bk}. 

For future research directions, one could generalize our results on the joint distribution of alternating permutations in Theorem~\ref{joint-dist-pattern-thm} by exploring the simultaneous control of three or four maxima/minima statistics, akin to the approach in \cite{CKZ} for separable permutations. Additionally, initiating studies on maxima/minima statistics for POP-avoiding alternating permutations, in particular, those enumerated in Theorem~\ref{POP-thm}, would be interesting. Lastly, can our combinatorial interpretation of the Springer numbers in terms of up-down permutations of even length, invariant under the composition of reverse and complement operations, be employed to find the meaning of the statistics recorded by $q$ in the following $q$-analogues of the generating function (\ref{Springer numbers})? The first of these $q$-analogues appears in \cite{EuFu}, while the last one resembles $(\sec (t))^{q}$ given in Theorem~\ref{dist-single-pattern-thm}, which provides the distribution of right-to-left maxima on up-down permutations of even length.
\begin{eqnarray*}
\frac{1}{\cos t - q\sin t} &= &1 + qt + (1+2q^2)\frac{t^2}{2!}+(5q+6q^3)\frac{t^3}{3!}+(5+28q^2+24q^4)\frac{t^4}{4!}+\cdots\\
\frac{1}{\cos t - \sin qt} &= & 1 + qt +(1+2q^2)\frac{t^2}{2!}+(6q+5q^3)\frac{t^3}{3!}+(5+36q^2+16q^4)\frac{t^4}{4!}+\cdots \\
\frac{1}{\cos qt - \sin t} &= & 1 + t +(2+q^2)\frac{t^2}{2!}+(5+6q^2)\frac{t^3}{3!}+(16+36q^2+5q^4)\frac{t^4}{4!}+\cdots \\
\frac{1}{(\cos t - \sin t)^{q}}&= & 1 +q t +(2q+q^2)\frac{t^2}{2!}+(4q+6q^2+q^3)\frac{t^3}{3!}+(16q+28q^2+12q^3+q^4)\frac{t^4}{4!}+\cdots \\
\end{eqnarray*}

\noindent
{\bf Acknowledgments.}  The work of the third author was supported by the National Science Foundation of China (No.\ 12171362).


\begin{thebibliography}{10}

\bibitem{Andre1}
D. {Andr\'{e}}, D\'{e}veloppements de sec x et de tang x, \emph{C. R. Acad. Sci. Paris}, \textbf{88} (1879), 965--967.

\bibitem{Andre2}
D. {Andr\'{e}}, M\'{e}moire sur les permutations altern\'{e}es, {\em J. Math. Pur. Appl.}, \textbf{7} (1881), 167--184.

\bibitem{Arnold} V.I. Arnol’d. The calculus of snakes and the combinatorics of Bernoulli, Euler and Springer numbers of
Coxeter groups, {\em Usp. Mat. Nauk} {\bf 47 (1)} (1992) 3--45; English translation in {\em Russian Math. Surv.} {\bf 47 (1)}
(1992) 1--51.

\bibitem{Callan} D. Callan, Private communication, 2024.

\bibitem{CKZ}  J. N. Chen, S. Kitaev and P. B. Zhang.  Distributions of statistics on separable permutations, {\em Discrete Appl. Math.} {\bf 355} (2024) 169--179.

\bibitem{Chen-et-al} W.Y.C. Chen, N.J.Y. Fan and J.Y.T. Jia. Labelled ballot paths and the Springer numbers, {\em SIAM J. Discrete Math.} {\bf 25(4)} (2011), 1530--1546.


\bibitem{EuFu} S.-P. Eu and T.-S. Fu. Springer Numbers and Arnold Families Revisited, {\em Arnold Math. J.} {\bf 10} (2024) 125--154.

\bibitem{GaoKit2019} A. L.L. Gao and S. Kitaev. On partially ordered patterns of length 4 and 5 in permutations, {\em Electron. J. Combin.} {\bf 26} (2019) 3, 31pp.

\bibitem{Gla1898} J.W.L. Glaisher. On the Bernoullian function, {\em Quart. J. Pure Appl. Math.} {\bf 29} (1898) 1--168.

\bibitem{Gla1914} J.W.L. Glaisher. On the coefficients in the expansions of $\cos x/\cos 2x$ and $\sin x/\cos 2x$, {\em Quart. J. Pure Appl. Math.} {\bf 45} (1914) 187--222.

\bibitem{HHM-2022} E. Hartung, H. P. Hoang, T. M\"{u}tze, A. Williams. Combinatorial generation via permutation languages. I. Fundamentals. {\em Trans. Amer. Math. Soc.} {\bf 375} (2022), no. 4, 2255--2291.

\bibitem{J-V2014} M. Josuat-Verg\'es. Enumeration of snakes and cycle-alternating permutations, {\em Australas. J. Combin.} {\bf 60} (2014) 279--305.

\bibitem{Kit2007} S. Kitaev. Introduction to partially ordered patterns. {\em Discrete Appl. Math.} {\bf 155} (2007), no. 8, 929--944.

\bibitem{Kitaev2011} S. Kitaev. Patterns in permutations and words, Monographs in Theoretical Computer Science, An EATCS Series, Springer, 2011.

\bibitem{KitRem2012} S. Kitaev and J. Remmel. Quadrant marked mesh patterns in alternating permutations. {\em S\'em. Lothar. Combin.} {\bf B68a} (2012) 20 pp. 

\bibitem{kitrem} S. Kitaev and J. Remmel. Quadrant marked mesh patterns, {\em J. Integer Seq.} {\bf 12} Issue 4 (2012), Article 12.4.7.

\bibitem{oeis} OEIS, \emph{The Online Encyclopedia of Integer Sequences}, \url{http://oeis.org}.

\bibitem{Sokal2020} A. D. Sokal. The Euler and Springer numbers as moment sequences, {\em Expositiones Mathematicae} {\bf 38(1)} (2020), 1--26.

\bibitem{Springer1971} T.A. Springer. Remarks on a combinatorial problem, {\em Nieuw Arch. Wiskd.} {\bf 19} (1971) 30--36.

\bibitem{YWZ} K.T.K.Yap, D. Wehlau, I. Zaguia. Permutations avoiding certain partially-ordered patterns. {\em Electron. J. Combin.} {\bf 28} (2021), no. 3, Paper No. 3.18, 41 pp. 
	
\end{thebibliography}
\end{document}